\numberwithin{equation}{section}
\newcommand{\N}{\mathbb{N}}
\newcommand{\R}{\mathbb{R}}
\newcommand{\Z}{\mathbb{Z}}
\newcommand{\mm}{{\mbox{\boldmath$m$}}}
\newcommand{\sfd}{{\sf d}}
\newcommand{\Kliminf}{K\kern-3pt-\kern-2pt\mathop{\rm lim\,inf}\limits}  
\renewcommand{\d}{{\mathrm d}}
\newcommand{\restr}[1]{\lower3pt\hbox{$|_{#1}$}}
\newcommand{\eps}{\varepsilon}  
\newcommand{\nchi}{{\raise.3ex\hbox{$\chi$}}}
\newcommand{\weakto}{\rightharpoonup}
\newcommand{\lims}{\varlimsup}
\newcommand{\fr}{\penalty-20\null\hfill$\blacksquare$}                      
\newcommand{\X}{{\rm X}}
\renewcommand{\mm}{\mathfrak m}                                
\newenvironment{proof}{\removelastskip\par\medskip   
\noindent{\textit{Proof.}}\rm}{\penalty-20\null\hfill$\square$\par\medbreak}
\newtheorem{theorem}{Theorem}[section]
\newtheorem{lemma}[theorem]{Lemma}
\newtheorem{proposition}[theorem]{Proposition}
\newtheorem{definition}[theorem]{Definition}
\newtheorem{remark}[theorem]{Remark}
\newcommand{\beq}{\begin{equation}}
\newcommand{\eeq}{\end{equation}}
\title{Differential structure associated to axiomatic Sobolev spaces}
\author{Nicola Gigli\thanks{\textsc{SISSA, Via Bonomea 265, 34136 Trieste.}
\textit{E-mail address:} \textsf{ngigli@sissa.it}}
\and Enrico Pasqualetto\thanks{\textsc{SISSA, Via Bonomea 265, 34136 Trieste.}
\textit{E-mail address:} \textsf{epasqual@sissa.it}}}
\begin{document}
\maketitle
\begin{abstract} 
The aim of this note is to explain in which sense an axiomatic Sobolev space over a
general metric measure space (\`{a} la Gol'dshtein-Troyanov) induces -- under
suitable locality assumptions -- a first-order differential structure.
\end{abstract}
\bigskip

\textbf{MSC2010:} primary 46E35, secondary 51Fxx 
\medskip

\textbf{Keywords:} axiomatic Sobolev space, locality of differentials, cotangent module
\tableofcontents
\section*{Introduction}
\addcontentsline{toc}{section}{Introduction}
An axiomatic approach to the theory of Sobolev spaces over abstract metric
measure spaces has been proposed by V.\ Gol'dshtein and M.\ Troyanov in
\cite{GoldTroy01}. Their construction covers many important notions:
the weighted Sobolev space on a Riemannian manifold,
the Haj{\l}asz Sobolev space \cite{Hajlasz1996} and the
Sobolev space based on the concept of upper gradient
\cite{heinonen1998,Cheeger00,Shanmugalingam00,AmbrosioGigliSavare11}.

A key concept in \cite{GoldTroy01} is the so-called \emph{$D$-structure}:
given a metric measure space $(\X,\sfd,\mm)$ and an exponent $p\in(1,\infty)$,
we associate to any function $u\in L^p_{loc}(\X)$ a family $D[u]$ of non-negative Borel
functions called \emph{pseudo-gradients}, which exert some control from above
on the variation of $u$. The pseudo-gradients are not explicitly specified,
but they are rather supposed to fulfil a list of axioms.
Then the space $W^{1,p}(\X,\sfd,\mm,D)$ is defined as the set of all
functions in $L^p(\mm)$ admitting a pseudo-gradient in $L^p(\mm)$.
By means of standard functional analytic techniques, it is possible
to associate to any Sobolev function $u\in W^{1,p}(\X,\sfd,\mm,D)$ a
uniquely determined minimal object $\underline D u\in D[u]\cap L^p(\mm)$,
called \emph{minimal pseudo-gradient} of $u$.

More recently, the first author of the present paper introduced a differential
structure on general metric measure spaces (cf.\ \cite{Gigli14,Gigli17}).
The purpose was to develop a second-order differential calculus on spaces
satisfying lower Ricci curvature bounds (or briefly, $\sf RCD$ spaces;
we refer to \cite{Ambrosio18,Villani2016,Villani2017} for a presentation
of this class of spaces).
The fundamental tools for this theory are the $L^p$-normed $L^\infty$-modules,
among which a special role is played by the \emph{cotangent module}, denoted
by $L^2(T^*\X)$. Its elements can be thought of as `measurable $1$-forms on $\X$'.

The main result of this paper -- namely Theorem \ref{thm:cot_mod} -- says that
any $D$-structure (satisfying suitable locality properties) gives rise to a natural
notion of cotangent module $L^p(T^*\X;D)$, whose properties are analogous to the ones
of the cotangent module $L^2(T^*\X)$ described in \cite{Gigli14}.
Roughly speaking, the cotangent module allows us to represent minimal
pseudo-gradients as pointwise norms of suitable linear objects.
More precisely, this theory provides the existence of an abstract differential
$\d:\,W^{1,p}(\X,\sfd,\mm,D)\to L^p(T^*\X;D)$, which is a linear operator
such that the pointwise norm $|\d u|\in L^p(\mm)$ of $\d u$ coincides with
$\underline D u$ in the $\mm$-a.e.\ sense for any
function $u\in W^{1,p}(\X,\sfd,\mm,D)$.
\section{General notation}
For the purpose of the present paper, a \emph{metric measure space}
is a triple $(\X,\sfd,\mm)$, where
\begin{equation}\begin{split}
(\X,\sfd)&\qquad\text{is a complete and separable metric space,}\\
\mm\neq 0&\qquad\text{is a non-negative Borel measure on }\X\text{, finite on balls.}
\end{split}\end{equation}
Fix $p\in[1,\infty)$. Several functional spaces over $\X$
will be used in the forthcoming discussion:
\begin{align*}
L^0(\mm):&\quad\text{ the Borel functions }u:\,\X\to\R
\text{, considered up to }\mm\text{-a.e.\ equality.}\\
L^p(\mm):&\quad\text{ the functions }u\in L^0(\mm)\text{ for which }
|u|^p\text{ is integrable.}\\
L^p_{loc}(\mm):&\quad\text{ the functions }u\in L^0(\mm)
\text{ with }u\restr B\in L^p\big(\mm\restr B\big)\text{ for any }
B\subseteq\X\text{ bounded Borel.}\\
L^\infty(\mm):&\quad\text{ the functions }u\in L^0(\mm)\text{ that are
essentially bounded.}\\
L^0(\mm)^+:&\quad\text{ the Borel functions }u:\,\X\to[0,+\infty]
\text{, considered up to }\mm\text{-a.e.\ equality.}\\
L^p(\mm)^+:&\quad\text{ the functions }u\in L^0(\mm)^+\text{ for which }
|u|^p\text{ is integrable.}\\
L^p_{loc}(\mm)^+:&\quad\text{ the functions }u\in L^0(\mm)^+
\text{ with }u\restr B\in L^p\big(\mm\restr B\big)^+\text{ for any }
B\subseteq\X\text{ bounded Borel.}\\
{\rm LIP}(\X):&\quad\text{ the Lipschitz functions }u:\X\to\R,
\text{ with Lipschitz constant denoted by }{\rm Lip}(u).\\
{\sf Sf}(\X):&\quad\text{ the functions }u\in L^0(\mm)\text{ that are simple,
i.e.\ with a finite essential image.}
\end{align*}
Observe that for any $u\in L^p_{loc}(\mm)^+$ it holds that $u(x)<+\infty$
for $\mm$-a.e.\ $x\in\X$. 
We also recall that the space ${\sf Sf}(\X)$ is strongly dense in $L^p(\mm)$
for every $p\in[1,\infty]$.
\begin{remark}{\rm
In \cite[Section 1.1]{GoldTroy01} a more general notion of $L^p_{loc}(\mm)$ is considered,
based upon the concept of \emph{$\mathcal K$-set}.
We chose the present approach for simplicity, but the following discussion would remain
unaltered if we replaced our definition of $L^p_{loc}(\mm)$ with
the one of \cite{GoldTroy01}.
\fr}\end{remark}
\section{Axiomatic theory of Sobolev spaces}
We begin by briefly recalling the axiomatic notion of Sobolev space that has been
introduced by V.\ Gol'dshtein and M.\ Troyanov in \cite[Section 1.2]{GoldTroy01}:
\begin{definition}[$D$-structure]
Let $(\X,\sfd,\mm)$ be a metric measure space. Let $p\in[1,\infty)$ be fixed.
Then a \emph{$D$-structure} on $(\X,\sfd,\mm)$ is any map $D$ associating to each function $u\in L^p_{loc}(\mm)$
a family $D[u]\subseteq L^0(\mm)^+$ of \emph{pseudo-gradients} of $u$,
which satisfies the following axioms:
\begin{itemize}
\item[\textbf{\emph{A1}}]\textbf{\emph{(Non triviality)}} It holds that
${\rm Lip}(u)\,\nchi_{\{u>0\}}\in D[u]$ for every $u\in L^p_{loc}(\mm)^+\cap{\rm LIP}(\X)$.
\item[\textbf{\emph{A2}}]\textbf{\emph{(Upper linearity)}} Let $u_1,u_2\in L^p_{loc}(\mm)$ be fixed.
Consider $g_1\in D[u_1]$ and $g_2\in D[u_2]$. Suppose that the inequality
$g\geq|\alpha_1|\,g_1+|\alpha_2|\,g_2$ holds $\mm$-a.e.\ in $\X$ for some $g\in L^0(\mm)^+$
and $\alpha_1,\alpha_2\in\R$. Then $g\in D[\alpha_1\,u_1+\alpha_2\,u_2]$.
\item[\textbf{\emph{A3}}]\textbf{\emph{(Leibniz rule)}} Fix a function $u\in L^p_{loc}(\mm)$ and
a pseudo-gradient $g\in D[u]$ of $u$. Then for every $\varphi\in{\rm LIP}(\X)$
bounded it holds that $g\,\sup_\X|\varphi|+{\rm Lip}(\varphi)\,|u|\in D[\varphi\,u]$.
\item[\textbf{\emph{A4}}]\textbf{\emph{(Lattice property)}} Fix $u_1,u_2\in L^p_{loc}(\mm)$.
Given any $g_1\in D[u_1]$ and $g_2\in D[u_2]$, one has that
$\max\{g_1,g_2\}\in D\big[\max\{u_1,u_2\}\big]\cap D\big[\min\{u_1,u_2\}\big]$.
\item[\textbf{\emph{A5}}]\textbf{\emph{(Completeness)}} Consider two sequences
$(u_n)_n\subseteq L^p_{loc}(\mm)$ and $(g_n)_n\subseteq L^p(\mm)$ that
satisfy $g_n\in D[u_n]$ for every $n\in\N$. Suppose that
there exist $u\in L^p_{loc}(\mm)$ and $g\in L^p(\mm)$ such that $u_n\to u$ in $L^p_{loc}(\mm)$
and $g_n\to g$ in $L^p(\mm)$. Then $g\in D[u]$.
\end{itemize}
\end{definition}
\begin{remark}\label{rmk:conseq_D-structure}{\rm
It follows from axioms \textbf{A1} and \textbf{A2} that $0\in D[c]$ for every constant map $c\in\R$.
Moreover, axiom \textbf{A2} grants
that the set $D[u]\cap L^p(\mm)$ is convex and that $D[\alpha\,u]=|\alpha|\,D[u]$
for every $u\in L^p_{loc}(\mm)$ and $\alpha\in\R\setminus\{0\}$,
while axiom \textbf{A5} implies that each set $D[u]\cap L^p(\mm)$ is closed
in the space $L^p(\mm)$.
\fr}\end{remark}

Given any Borel set $B\subseteq\X$, we define the \emph{$p$-Dirichlet energy} of a map
$u\in L^p(\mm)$ on $B$ as
\begin{equation}\label{eq:p-Dirichlet_energy}
\mathcal{E}_p(u|B):=\inf\left\{\int_B g^p\,\d\mm\;\bigg|\;g\in D[u]\right\}\in[0,+\infty].
\end{equation}
For the sake of brevity, we shall use the notation $\mathcal{E}_p(u)$ to indicate $\mathcal{E}_p(u|\X)$.
\begin{definition}[Sobolev space]
Let $(\X,\sfd,\mm)$ be a metric measure space. Let $p\in[1,\infty)$ be fixed. Given a $D$-structure
on $(\X,\sfd,\mm)$, we define the \emph{Sobolev class} associated to $D$ as
\begin{equation}\label{eq:Sobolev_class_associated_to_D-structure}
{\rm S}^p(\X)={\rm S}^p(\X,\sfd,\mm,D):=
\big\{u\in L^p_{loc}(\mm)\;:\;\mathcal{E}_p(u)<+\infty\big\}.
\end{equation}
Moreover, the \emph{Sobolev space} associated to $D$ is defined as
\begin{equation}\label{eq:Sobolev_space_associated_to_D-structure}
W^{1,p}(\X)=W^{1,p}(\X,\sfd,\mm,D):=L^p(\mm)\cap{\rm S}^p(\X,\sfd,\mm,D).
\end{equation}
\end{definition}
\begin{theorem}
The space $W^{1,p}(\X,\sfd,\mm,D)$ is a Banach space if endowed with the norm
\begin{equation}\label{eq:Sobolev_space_associated_to_D-structure_norm}
{\|u\|}_{W^{1,p}(\X)}:=\left({\|u\|}^p_{L^p(\mm)}+\mathcal{E}_p(u)\right)^{1/p}
\quad\text{ for every }u\in W^{1,p}(\X).
\end{equation}
\end{theorem}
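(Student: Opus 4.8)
The plan is to verify the two standard ingredients: that $\|\cdot\|_{W^{1,p}(\X)}$ is indeed a norm, and that $W^{1,p}(\X)$ is complete with respect to it. For the norm axioms, the only nontrivial point is the triangle inequality; homogeneity follows from the identity $D[\alpha u]=|\alpha|\,D[u]$ recorded in Remark \ref{rmk:conseq_D-structure} (together with $\mathcal E_p(0)=0$, since $0\in D[0]$), and positivity is clear because $\|u\|_{W^{1,p}(\X)}\geq\|u\|_{L^p(\mm)}$. For the triangle inequality, first I would establish the sublinearity estimate
\begin{equation}\label{eq:sublin}
\mathcal E_p(u_1+u_2)^{1/p}\leq\mathcal E_p(u_1)^{1/p}+\mathcal E_p(u_2)^{1/p}
\qquad\text{for all }u_1,u_2\in{\rm S}^p(\X).
\end{equation}
To see \eqref{eq:sublin}, fix $g_i\in D[u_i]\cap L^p(\mm)$; by axiom \textbf{A2} (with $\alpha_1=\alpha_2=1$) the function $g_1+g_2$ lies in $D[u_1+u_2]$, hence $\mathcal E_p(u_1+u_2)^{1/p}\leq\|g_1+g_2\|_{L^p(\mm)}\leq\|g_1\|_{L^p(\mm)}+\|g_2\|_{L^p(\mm)}$ by Minkowski, and then I would take the infimum over $g_1$ and $g_2$ separately. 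Given \eqref{eq:sublin}, the triangle inequality for $\|\cdot\|_{W^{1,p}(\X)}$ is a two-line consequence of Minkowski's inequality in $\R^2$ applied to the vectors $\big(\|u_i\|_{L^p(\mm)},\mathcal E_p(u_i)^{1/p}\big)$.

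For completeness, I would take a Cauchy sequence $(u_n)_n$ in $W^{1,p}(\X)$. Then $(u_n)_n$ is Cauchy in $L^p(\mm)$, so it converges in $L^p(\mm)$ — hence also in $L^p_{loc}(\mm)$ — to some $u\in L^p(\mm)$. To produce a pseudo-gradient for $u$, I would pass to a subsequence (not relabelled) with $\|u_{n+1}-u_n\|_{W^{1,p}(\X)}\leq 2^{-n}$, pick $g_n\in D[u_{n+1}-u_n]\cap L^p(\mm)$ with $\|g_n\|_{L^p(\mm)}\leq 2^{-n}+\mathcal E_p(u_{n+1}-u_n)^{1/p}$, and set $h_N:=\sum_{n=1}^{N}g_n$. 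By \textbf{A2} again, $h_N\in D[u_{N+1}-u_1]$, and $(h_N)_N$ converges in $L^p(\mm)$ (the series $\sum_n\|g_n\|_{L^p(\mm)}$ converges) to some $h:=\sum_{n\geq 1}g_n\in L^p(\mm)$, while $u_{N+1}-u_1\to u-u_1$ in $L^p_{loc}(\mm)$; axiom \textbf{A5} then yields $h\in D[u-u_1]$, so $u-u_1\in{\rm S}^p(\X)$ and therefore $u=u_1+(u-u_1)\in W^{1,p}(\X)$.

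It remains to upgrade convergence to the $W^{1,p}(\X)$-norm, and this is the step requiring the most care. The idea is to apply the same construction starting from index $k$ rather than $1$: for each fixed $k$, the function $h^{(k)}:=\sum_{n\geq k}g_n$ lies in $D[u-u_k]$ by the argument above (using \textbf{A2} to get $\sum_{n=k}^{N}g_n\in D[u_{N+1}-u_k]$ and then \textbf{A5} in the limit), whence $\mathcal E_p(u-u_k)^{1/p}\leq\|h^{(k)}\|_{L^p(\mm)}\leq\sum_{n\geq k}\|g_n\|_{L^p(\mm)}\to 0$ as $k\to\infty$. Combined with $\|u-u_k\|_{L^p(\mm)}\to 0$, this gives $\|u-u_k\|_{W^{1,p}(\X)}\to 0$ along the subsequence; since the original sequence is Cauchy in $W^{1,p}(\X)$, the whole sequence converges to $u$. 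The main obstacle is purely bookkeeping — keeping the telescoping sums, the choice of near-optimal pseudo-gradients, and the two invocations of \textbf{A5} (one for membership $u\in W^{1,p}(\X)$, one for the energy estimate) properly aligned — since all the analytic content is already packaged in axioms \textbf{A2} and \textbf{A5} and in the closedness/convexity observations of Remark \ref{rmk:conseq_D-structure}.
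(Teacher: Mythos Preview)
Your argument is correct. The paper does not actually supply its own proof of this theorem; it simply refers the reader to \cite[Theorem 1.5]{GoldTroy01}. Your proposal is the standard self-contained verification: sublinearity of $\mathcal E_p(\cdot)^{1/p}$ via axiom \textbf{A2} and Minkowski, the triangle inequality via Minkowski in $\R^2$, and completeness via a fast subsequence, telescoping, and the closure axiom \textbf{A5}. All steps are sound, and this is essentially the argument one finds in the cited source, so there is nothing to correct or compare.
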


For a proof of the previous result, we refer to \cite[Theorem 1.5]{GoldTroy01}.
\begin{proposition}[Minimal pseudo-gradient]
Let $(\X,\sfd,\mm)$ be a metric measure space and let $p\in(1,\infty)$.
Consider any $D$-structure on $(\X,\sfd,\mm)$. Let $u\in{\rm S}^p(\X)$ be given.
Then there exists a unique element $\underline{D}u\in D[u]$,
which is called the \emph{minimal pseudo-gradient} of $u$, such that
$\mathcal{E}_p(u)={\|\underline{D}u\|}^p_{L^p(\mm)}$.
\end{proposition}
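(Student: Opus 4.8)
The plan is to obtain $\underline{D}u$ as the solution of a minimization problem in $L^p(\mm)$ and then upgrade "infimizing sequence" to "actual minimizer inside $D[u]$" using the structural properties of $D[u]$ recorded in Remark \ref{rmk:conseq_D-structure}. First I would reduce to the case $\mathcal{E}_p(u)<+\infty$ (which holds since $u\in{\rm S}^p(\X)$), so that the admissible set $K:=D[u]\cap L^p(\mm)$ is nonempty; indeed any $g\in D[u]$ with $\int_\X g^p\,\d\mm<+\infty$ lies in $K$. By the observations in Remark \ref{rmk:conseq_D-structure}, $K$ is a convex subset of $L^p(\mm)$ which is closed in the $L^p(\mm)$-norm topology. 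Moreover $\mathcal{E}_p(u)=\inf\{\|g\|_{L^p(\mm)}^p : g\in K\}$, because any $g\in D[u]\setminus L^p(\mm)$ contributes $+\infty$ to the infimum in \eqref{eq:p-Dirichlet_energy} and thus can be ignored.

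Next I would invoke the standard fact that in a uniformly convex Banach space — and $L^p(\mm)$ is uniformly convex for $p\in(1,\infty)$ by Clarkson's inequalities — every nonempty closed convex set has a unique element of minimal norm. Apply this to $K$: there is a unique $\underline{D}u\in K$ with $\|\underline{D}u\|_{L^p(\mm)}^p=\inf_{g\in K}\|g\|_{L^p(\mm)}^p=\mathcal{E}_p(u)$. Since $K\subseteq D[u]$ we have $\underline{D}u\in D[u]$, and since $\underline{D}u\in L^p(\mm)$ the quantity $\|\underline{D}u\|_{L^p(\mm)}^p$ is finite and equals $\mathcal{E}_p(u)$, giving both existence and the claimed identity. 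Uniqueness of the element of minimal norm in $K$ gives uniqueness of $\underline{D}u$ as an element of $D[u]$ achieving $\mathcal{E}_p(u)$.

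For self-containedness I would include a short proof of the minimal-norm lemma rather than merely cite it: let $m:=\inf_{g\in K}\|g\|_{L^p(\mm)}$ and pick $g_n\in K$ with $\|g_n\|_{L^p(\mm)}\to m$. Using the uniform convexity of $L^p(\mm)$ together with convexity of $K$ (so $\tfrac{g_n+g_k}{2}\in K$, whence $\|\tfrac{g_n+g_k}{2}\|_{L^p(\mm)}\ge m$), one shows $(g_n)_n$ is Cauchy; its limit lies in $K$ by closedness and attains the norm $m$ by continuity of the norm. If $g,g'$ both attain $m$, then $\tfrac{g+g'}{2}\in K$ also attains it, and strict convexity of the unit ball forces $g=g'$. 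The only place requiring care is the passage from the infimum in \eqref{eq:p-Dirichlet_energy}, which ranges over all of $D[u]$, to the infimum over $K$; this is immediate once one notes that elements of $D[u]$ not in $L^p(\mm)$ give an infinite integral. I expect the main (minor) obstacle to be exactly this bookkeeping together with citing uniform convexity of $L^p$ correctly for the full range $p\in(1,\infty)$; the rest is the textbook Hilbert-space-style projection argument transported to $L^p$.
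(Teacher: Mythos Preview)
Your proposal is correct and follows exactly the approach sketched in the paper: the set $D[u]\cap L^p(\mm)$ is nonempty, convex and closed by Remark~\ref{rmk:conseq_D-structure}, and uniform convexity of $L^p(\mm)$ for $p\in(1,\infty)$ yields a unique element of minimal norm. The paper only states these ingredients and refers to \cite[Proposition~1.22]{GoldTroy01} for details, so your self-contained argument (including the Cauchy-sequence step and the reduction of the infimum from $D[u]$ to $D[u]\cap L^p(\mm)$) is a welcome expansion rather than a departure.
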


Both existence and uniqueness of the minimal pseudo-gradient follow from the
fact that the set $D[u]\cap L^p(\mm)$ is convex and closed by Remark
\ref{rmk:conseq_D-structure} and that the space $L^p(\mm)$ is uniformly convex;
see \cite[Proposition 1.22]{GoldTroy01} for the details.

In order to associate a differential structure to an axiomatic Sobolev space,
we need to be sure that the pseudo-gradients of a function depend only on the local
behaviour of the function itself, in a suitable sense. For this reason, we
propose various notions of locality:
\begin{definition}[Locality]
Let $(\X,\sfd,\mm)$ be a metric measure space. Fix $p\in(1,\infty)$.
Then we define five notions of locality for $D$-structures on $(\X,\sfd,\mm)$:
\begin{itemize}
\item[\textbf{\emph{L1}}]
If $B\subseteq\X$ is Borel and $u\in{\rm S}^p(\X)$ is $\mm$-a.e.\ constant
in $B$, then $\mathcal{E}_p(u|B)=0$.
\item[\textbf{\emph{L2}}]
If $B\subseteq\X$ is Borel and $u\in{\rm S}^p(\X)$ is $\mm$-a.e.\ constant
in $B$, then $\underline{D}u=0$ $\mm$-a.e.\ in $B$.
\item[\textbf{\emph{L3}}]
If $u\in{\rm S}^p(\X)$ and $g\in D[u]$, then $\nchi_{\{u>0\}}\,g\in D[u^+]$.
\item[\textbf{\emph{L4}}]
If $u\in{\rm S}^p(\X)$ and $g_1,g_2\in D[u]$,
then $\min\{g_1,g_2\}\in D[u]$.
\item[\textbf{\emph{L5}}]
If $u\in{\rm S}^p(\X)$ then $\underline Du\leq g$ holds $\mm$-a.e.\ in $\X$
for every $g\in D[u]$.
\end{itemize}
\end{definition}
\begin{remark}{\rm
In the language of \cite[Definition 1.11]{GoldTroy01}, the properties \textbf{L1} and
\textbf{L3} correspond to \emph{locality} and \emph{strict locality},
respectively.
\fr}\end{remark}

We now discuss the relations among the several notions of locality:
\begin{proposition}\label{prop:implications_locality}
Let $(\X,\sfd,\mm)$ be a metric measure space. Let $p\in(1,\infty)$.
Fix a $D$-structure on $(\X,\sfd,\mm)$. Then the following
implications hold:
\begin{equation}\begin{split}
\textbf{\emph{L3}}&\quad\Longrightarrow\\
\textbf{\emph{L4}}&\quad\Longleftrightarrow\\
\textbf{\emph{L1}}+\textbf{\emph{L5}}&\quad\Longrightarrow
\end{split}\begin{split}
&\quad\textbf{\emph{L2}}\quad\Longrightarrow\quad\textbf{\emph{L1}},\\
&\quad\textbf{\emph{L5}}\\
&\quad\textbf{\emph{L2}}+\textbf{\emph{L3}}.
\end{split}\end{equation}
\end{proposition}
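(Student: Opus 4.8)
The plan is to establish the diagram by verifying a handful of short implications, organizing them so that the two cycles (the equivalence \textbf{L4}$\Leftrightarrow$\textbf{L5} and the various arrows into and out of \textbf{L2}) close up. First I would prove \textbf{L2}$\Rightarrow$\textbf{L1}, which is immediate: if $u$ is $\mm$-a.e.\ constant on $B$ and \textbf{L2} gives $\underline Du=0$ $\mm$-a.e.\ on $B$, then $\int_B(\underline Du)^p\,\d\mm=0$, and since $\underline Du\in D[u]$ this forces $\mathcal E_p(u|B)=0$. Next I would treat \textbf{L4}$\Leftrightarrow$\textbf{L5}. For \textbf{L5}$\Rightarrow$\textbf{L4}: given $g_1,g_2\in D[u]$, the function $\min\{g_1,g_2\}$ dominates $\underline Du$ pointwise by \textbf{L5}, but I actually need $\min\{g_1,g_2\}\in D[u]$, so I would instead argue that $\min\{g_1,g_2\}\ge\underline Du$ $\mm$-a.e.\ and $\underline Du\in D[u]$, then use axiom \textbf{A2} with $\alpha_1=1,\alpha_2=0$, $u_1=u$, $u_2=0$ (recall $0\in D[0]$ by Remark \ref{rmk:conseq_D-structure}) to conclude that any $\mm$-a.e.\ larger nonnegative Borel function, in particular $\min\{g_1,g_2\}$, lies in $D[u]$. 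For \textbf{L4}$\Rightarrow$\textbf{L5}: fix $g\in D[u]$; then $\min\{g,\underline Du\}\in D[u]$ by \textbf{L4}, and $\|\min\{g,\underline Du\}\|_{L^p(\mm)}\le\|\underline Du\|_{L^p(\mm)}$, so by minimality (Proposition on minimal pseudo-gradient) equality holds, which forces $\min\{g,\underline Du\}=\underline Du$ $\mm$-a.e., i.e.\ $\underline Du\le g$ $\mm$-a.e.

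For the remaining arrows I would proceed as follows. To get \textbf{L3}$\Rightarrow$\textbf{L2}: suppose $u\in{\rm S}^p(\X)$ is $\mm$-a.e.\ equal to a constant $c$ on a Borel set $B$. Applying \textbf{L3} to $u-c$ and to $c-u$ (both in ${\rm S}^p(\X)$, with $\underline D(u-c)=\underline D(c-u)=\underline Du$ by the translation/reflection invariance that follows from \textbf{A2} and $0\in D[c]$) shows that $\nchi_{\{u-c>0\}}\,\underline Du\in D[(u-c)^+]$ and $\nchi_{\{c-u>0\}}\,\underline Du\in D[(c-u)^+]$; combining these via \textbf{A2} with $(u-c)^+-(c-u)^+=u-c$ yields $\nchi_{\{u\ne c\}}\,\underline Du\in D[u-c]=D[u]$ (up to translation). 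By minimality this function has $L^p$ norm at least $\|\underline Du\|_{L^p(\mm)}$, which forces $\underline Du=0$ $\mm$-a.e.\ on $\{u=c\}\supseteq B$ (mod $\mm$-null sets), giving \textbf{L2}. To get \textbf{L1}$+$\textbf{L5}$\Rightarrow$\textbf{L4}: since \textbf{L5} alone already gives \textbf{L4} by the equivalence just proved, this implication is a consequence — though more honestly I would note that \textbf{L1}$+$\textbf{L5}$\Rightarrow$\textbf{L2} directly (if $u$ is constant on $B$, \textbf{L1} gives $\mathcal E_p(u|B)=0$, so there are $g_n\in D[u]$ with $\int_B g_n^p\,\d\mm\to0$; by \textbf{L5}, $0\le\underline Du\le g_n$ $\mm$-a.e., hence $\int_B(\underline Du)^p\,\d\mm=0$), and then \textbf{L2}$\Rightarrow$\textbf{L3} needs to be shown, or one routes through the diagram.

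The step I expect to be the main obstacle is \textbf{L2}$\Rightarrow$\textbf{L3} (equivalently, closing the triangle so that \textbf{L1}$+$\textbf{L5} implies \textbf{L3}). Here I would argue: given $u\in{\rm S}^p(\X)$ and $g\in D[u]$, I want $\nchi_{\{u>0\}}\,g\in D[u^+]$. The idea is to exploit that $u^+=\max\{u,0\}$ and $0=\min\{u,0\}+u^+$ — more precisely, $u^-:=\max\{-u,0\}=u^+-u$, so $u^+$ and $-u^-$ differ from $u$ and $0$ by lattice operations. Using \textbf{A4} (lattice property) with $u_1=u$, $u_2=0$ (and $0\in D[0]$, $g\in D[u]$) gives $\max\{g,0\}=g\in D[u^+]\cap D[u^-]$; this is not yet the claim, since I need the sharper $\nchi_{\{u>0\}}\,g$ rather than $g$. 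To improve $g$ to $\nchi_{\{u>0\}}\,g$ on $u^+$, I would observe that $u^+$ is $\mm$-a.e.\ constant (equal to $0$) on $\{u\le0\}$, so \textbf{L2} applied to $u^+\in{\rm S}^p(\X)$ gives $\underline D(u^+)=0$ $\mm$-a.e.\ on $\{u\le0\}$; then, knowing $g\in D[u^+]$ and wanting to replace it by $\nchi_{\{u>0\}}\,g$, I would use \textbf{A2}-type manipulation together with \textbf{L4}/\textbf{L5} (already available from \textbf{L1}$+$\textbf{L5}) to form $\min\{g,\,\nchi_{\{u>0\}}\,g+\nchi_{\{u\le0\}}\cdot M\}$ for large $M$, which equals $\nchi_{\{u>0\}}\,g$ on $\{u>0\}$ and $0$ on $\{u\le0\}$ in the limit — one must check membership in $D[u^+]$ is preserved, which is where \textbf{A5} (completeness) and the closedness of $D[u^+]\cap L^p(\mm)$ come in. Getting this truncation argument to land precisely on $\nchi_{\{u>0\}}\,g$, rather than something merely $\mm$-a.e.\ comparable, is the delicate point; everything else is bookkeeping with the axioms.
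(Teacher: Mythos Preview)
Your arguments for \textbf{L2}$\Rightarrow$\textbf{L1}, \textbf{L4}$\Leftrightarrow$\textbf{L5}, \textbf{L3}$\Rightarrow$\textbf{L2}, and \textbf{L1}$+$\textbf{L5}$\Rightarrow$\textbf{L2} are correct and essentially coincide with the paper's.

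The genuine gap is in the step \textbf{L1}$+$\textbf{L5}$\Rightarrow$\textbf{L3}. Your proposed truncation does not do what you claim: for the function
\[
h_M:=\min\big\{g,\ \nchi_{\{u>0\}}\,g+\nchi_{\{u\le 0\}}\cdot M\big\},
\]
on the set $\{u\le 0\}$ one has $h_M=\min\{g,M\}$, which for large $M$ equals $g$, not $0$. So $h_M\to g$, not $\nchi_{\{u>0\}}\,g$. Even if you reverse the limit and send $M\downarrow 0$, the argument still breaks: to invoke \textbf{L4} you would need $\nchi_{\{u>0\}}\,g+\nchi_{\{u\le 0\}}\cdot M\in D[u^+]$, and nothing you have established gives that. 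There is no way to bootstrap the truncation without already knowing the conclusion.

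The fix is much simpler and avoids any limiting procedure. You have already obtained (via \textbf{A4} with $0\in D[0]$) that $g\in D[u^+]$, and (via \textbf{L2}) that $\underline D(u^+)=0$ $\mm$-a.e.\ on $\{u\le 0\}$. Now apply \textbf{L5} directly to $g\in D[u^+]$: this gives $\underline D(u^+)\le g$ $\mm$-a.e.\ on all of $\X$. Combining the two facts, $\underline D(u^+)\le \nchi_{\{u>0\}}\,g$ holds $\mm$-a.e.\ (it is $0\le 0$ on $\{u\le 0\}$ and $\underline D(u^+)\le g$ on $\{u>0\}$). Since $\underline D(u^+)\in D[u^+]$, axiom \textbf{A2} (upward monotonicity) immediately yields $\nchi_{\{u>0\}}\,g\in D[u^+]$. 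This is the paper's route, and it closes the diagram with no approximation needed.
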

\begin{proof}\\
{\color{blue}\textbf{L2} $\Longrightarrow$ \textbf{L1}.}
Simply notice that $\mathcal E_p(u|B)\leq\int_B(\underline D u)^p\,\d\mm=0$.\\
{\color{blue}\textbf{L3} $\Longrightarrow$ \textbf{L2}.}
Take a constant $c\in\R$ such that the equality $u=c$ holds $\mm$-a.e.\ in $B$.
Given that $\underline{D}u\in D[u-c]\cap D[c-u]$ by axiom \textbf{A2} and Remark
 \ref{rmk:conseq_D-structure}, we deduce from \textbf{L3} that
\[\begin{split}
&\nchi_{\{u>c\}}\,\underline{D}u\in D\big[(u-c)^+\big],\\
&\nchi_{\{u<c\}}\,\underline{D}u\in D\big[(c-u)^+\big].
\end{split}\]
Given that $u-c=(u-c)^+ -(c-u)^+$, by applying again axiom \textbf{A2} we see that
$$\nchi_{\{u\neq c\}}\,\underline{D}u=
\nchi_{\{u>c\}}\,\underline{D}u+\nchi_{\{u<c\}}\,\underline{D}u
\in D[u-c]=D[u].$$
Hence the minimality of $\underline{D}u$ grants that
$$\int_\X(\underline{D}u)^p\,\d\mm\leq\int_{\{u\neq c\}}(\underline{D}u)^p\,\d\mm,$$
which implies that $\underline{D}u=0$ holds $\mm$-a.e.\ in $\{u=c\}$, thus also $\mm$-a.e.\ in $B$.
This means that the $D$-structure satisfies the property \textbf{L2}, as required.\\
{\color{blue}\textbf{L4} $\Longrightarrow$ \textbf{L5}.} We argue by contradiction:
suppose the existence of $u\in{\rm S}^p(\X)$ and $g\in D[u]$ such that
$\mm\big(\{\underline Du>g\}\big)>0$, whence $h:=\min\{\underline D u,g\}\in L^p(\mm)$ 
satisfies $\int h^p\,\d\mm<\int(\underline D u)^p\,\d\mm$. Since $h\in D[u]$ by
\textbf{L4}, we deduce that $\mathcal E_p(u)<\int(\underline D u)^p\,\d\mm$,
getting a contradiction.\\
{\color{blue}\textbf{L5} $\Longrightarrow$ \textbf{L4}.} Since $\underline D u\leq g_1$
and $\underline D u\leq g_2$ hold $\mm$-a.e., we see that $\underline D u\leq\min\{g_1,g_2\}$ holds $\mm$-a.e.\ as well. Therefore $\min\{g_1,g_2\}\in D[u]$ by \textbf{A2}.\\
{\color{blue}\textbf{L1}+\textbf{L5} $\Longrightarrow$ \textbf{L2}+\textbf{L3}.}
Property \textbf{L1} grants the existence of $(g_n)_n\subseteq D[u]$
with $\int_B(g_n)^p\,\d\mm\to 0$. Hence \textbf{L5} tells us that
$\int_B(\underline D u)^p\,\d\mm\leq\lim_n\int_B(g_n)^p\,\d\mm=0$,
which implies that $\underline D u=0$ holds $\mm$-a.e.\ in $B$, yielding \textbf{L2}.
We now prove the validity of \textbf{L3}: it holds that $D[u]\subseteq D[u^+]$,
because we know that $h=\max\{h,0\}\in D\big[\max\{u,0\}\big]=D[u^+]$ for every $h\in D[u]$
by \textbf{A4} and $0\in D[0]$, in particular $u^+\in{\rm S}^p(\X)$.
Given that $u^+=0$ $\mm$-a.e.\ in the set $\{u\leq 0\}$, one has that $\underline D u^+=0$
holds $\mm$-a.e.\ in $\{u\leq 0\}$ by \textbf{L2}. Hence for any $g\in D[u]$
we have $\underline D u^+\leq\nchi_{\{u>0\}}\,g$ by \textbf{L5}, which implies
that $\nchi_{\{u>0\}}\,g\in D[u^+]$ by \textbf{A2}. Therefore \textbf{L3} is proved.
\end{proof}
\begin{definition}[Pointwise local]
Let $(\X,\sfd,\mm)$ be a metric measure space and $p\in(1,\infty)$.
Then a $D$-structure on $(\X,\sfd,\mm)$ is said to be \emph{pointwise local}
provided it satisfies \textbf{\emph{L1}} and \textbf{\emph{L5}} (thus
also \textbf{\emph{L2}}, \textbf{\emph{L3}} and \textbf{\emph{L4}} by Proposition
\ref{prop:implications_locality}).
\end{definition}

We now recall other two notions of locality for $D$-structures that
appeared in the literature:
\begin{definition}[Strong locality]
Let $(\X,\sfd,\mm)$ be a metric measure space and $p\in(1,\infty)$.
Consider a $D$-structure on $(\X,\sfd,\mm)$. Then we give the following definitions:
\begin{itemize}
\item[$\rm i)$] We say that $D$ is \emph{strongly local in the sense
of Timoshin} provided
\begin{equation}\label{eq:loc_Timosh}
\nchi_{\{u_1<u_2\}}\,g_1+\nchi_{\{u_2<u_1\}}\,g_2+\nchi_{\{u_1=u_2\}}\,(g_1\wedge g_2)
\in D[u_1\wedge u_2]
\end{equation}
whenever $u_1,u_2\in{\rm S}^p(\X)$, $g_1\in D[u_1]$ and $g_2\in D[u_2]$.
\item[$\rm ii)$] We say that $D$ is \emph{strongly local in the sense
of Shanmugalingam} provided
\begin{equation}
\nchi_B\,g_1+\nchi_{\X\setminus B}\,g_2\in D[u_2]
\quad\text{ for every }g_1\in D[u_1]\text{ and }g_2\in D[u_2]
\end{equation}
whenever $u_1,u_2\in{\rm S}^p(\X)$ satisfy $u_1=u_2$ $\mm$-a.e.\ on some Borel
set $B\subseteq\X$.
\end{itemize}
\end{definition}
\medskip

The above two notions of strong locality have been proposed in \cite{Timosh}
and \cite{Shanm}, respectively.
We now prove that they are actually both equivalent to our pointwise locality property:
\begin{lemma}
Let $(\X,\sfd,\mm)$ be a metric measure space and $p\in(1,\infty)$.
Fix any $D$-structure on $(\X,\sfd,\mm)$. Then the following are equivalent:
\begin{itemize}
\item[$\rm i)$] $D$ is pointwise local.
\item[$\rm ii)$] $D$ is strongly local in the sense of Shanmugalingam.
\item[$\rm iii)$] $D$ is strongly local in the sense of Timoshin.
\end{itemize}
\end{lemma}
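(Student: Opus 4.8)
The plan is to prove the chain of equivalences (i) $\Rightarrow$ (ii) $\Rightarrow$ (iii) $\Rightarrow$ (i), using the characterisations of pointwise locality provided by Proposition \ref{prop:implications_locality} — in particular the fact that a pointwise local $D$-structure satisfies all of \textbf{L1}--\textbf{L5}, while conversely \textbf{L2}$+$\textbf{L5} already gives pointwise locality via \textbf{L2}$\Rightarrow$\textbf{L1}.

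\textbf{(i) $\Rightarrow$ (ii).} Assume $D$ is pointwise local, and let $u_1,u_2\in{\rm S}^p(\X)$ agree $\mm$-a.e.\ on a Borel set $B$, with $g_1\in D[u_1]$, $g_2\in D[u_2]$. The key remark is that $u_1-u_2$ vanishes $\mm$-a.e.\ on $B$, so $u_1-u_2$ is $\mm$-a.e.\ constant (equal to $0$) on $B$; by \textbf{L2} applied to $u_1-u_2\in{\rm S}^p(\X)$ (which lies in ${\rm S}^p(\X)$ by \textbf{A2}) we get $\underline D(u_1-u_2)=0$ $\mm$-a.e.\ on $B$. Since $u_2 = u_1 - (u_1-u_2)$, axiom \textbf{A2} gives $g_1 + \underline D(u_1-u_2)\in D[u_2]$; because $\underline D(u_1-u_2)=0$ $\mm$-a.e.\ on $B$, we may replace it there to conclude $\nchi_B\,g_1 + \nchi_{\X\setminus B}\bigl(g_2 \vee (g_1 + \underline D(u_1-u_2))\bigr)$ lies in $D[u_2]$ — but a cleaner route is: by \textbf{L5}, $\underline D u_2 \le g_1 + \underline D(u_1-u_2) = g_1$ $\mm$-a.e.\ on $B$ and $\underline D u_2\le g_2$ $\mm$-a.e.\ everywhere, hence $\underline D u_2 \le \nchi_B\,g_1 + \nchi_{\X\setminus B}\,g_2$ $\mm$-a.e., and \textbf{A2} yields $\nchi_B\,g_1 + \nchi_{\X\setminus B}\,g_2\in D[u_2]$, which is exactly Shanmugalingam strong locality.

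\textbf{(ii) $\Rightarrow$ (iii).} Assume Shanmugalingam strong locality and take $u_1,u_2\in{\rm S}^p(\X)$, $g_i\in D[u_i]$. First note $u_1\wedge u_2\in{\rm S}^p(\X)$ by \textbf{A4}. I would split $\X$ into the three Borel sets $\{u_1<u_2\}$, $\{u_2<u_1\}$, $\{u_1=u_2\}$. On $\{u_1\le u_2\}$ one has $u_1\wedge u_2 = u_1$ $\mm$-a.e., so by (ii) (comparing $u_1\wedge u_2$ with $u_1$ on $B=\{u_1\le u_2\}$) any pseudo-gradient of $u_1$ can be glued in; symmetrically on $\{u_2\le u_1\}$. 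Applying \textbf{A4} to get $g_1\wedge g_2\in D[u_1\wedge u_2]$, and then using (ii) twice to replace $g_1\wedge g_2$ by $g_1$ on $\{u_1<u_2\}$ and by $g_2$ on $\{u_2<u_1\}$ while keeping $g_1\wedge g_2$ on $\{u_1=u_2\}$, produces precisely the function in \eqref{eq:loc_Timosh}; the admissibility of the glued function is justified by iterating (ii) on the relevant Borel pieces.

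\textbf{(iii) $\Rightarrow$ (i).} Assume Timoshin strong locality. It suffices to establish \textbf{L1} and \textbf{L5}. For \textbf{L5}: given $u\in{\rm S}^p(\X)$ and $g\in D[u]$, apply \eqref{eq:loc_Timosh} with $u_1=u_2=u$, $g_1=\underline D u$, $g_2=g$; then $\{u_1<u_2\}=\{u_2<u_1\}=\emptyset$ and $\{u_1=u_2\}=\X$, so $\underline D u\wedge g\in D[u\wedge u]=D[u]$, and minimality of $\underline D u$ forces $\underline D u\le g$ $\mm$-a.e., which is \textbf{L5}. For \textbf{L1}: if $u$ is $\mm$-a.e.\ constant $=c$ on a Borel set $B$, I would compare $u$ with the constant function $c$ (noting $0\in D[c]$ by Remark \ref{rmk:conseq_D-structure} and $c\in{\rm S}^p(\X)$), applying \eqref{eq:loc_Timosh} with $u_1=u$, $u_2=c$, $g_1=\underline D u$, $g_2=0$ to move the constant to the appropriate side; on $\{u=c\}\supseteq B$ (up to $\mm$-null sets) the glued pseudo-gradient vanishes, and a short argument using \textbf{A2} and minimality (as in the proof of \textbf{L3}$\Rightarrow$\textbf{L2} in Proposition \ref{prop:implications_locality}) shows $\underline D u = 0$ $\mm$-a.e.\ on $B$, hence $\mathcal E_p(u|B)=0$. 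Together, \textbf{L1}$+$\textbf{L5} is exactly pointwise locality.

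The main obstacle I expect is the bookkeeping in (ii) $\Rightarrow$ (iii): one must carefully verify that gluing admissible pseudo-gradients across the three disjoint Borel sets $\{u_1<u_2\}$, $\{u_2<u_1\}$, $\{u_1=u_2\}$ yields an element of $D[u_1\wedge u_2]$, which requires applying Shanmugalingam locality on each piece and then combining — the combination step needs either a two-set version of (ii) applied iteratively or a direct appeal to \textbf{L5} after first deriving it, so the cleanest organisation may be to prove \textbf{L5} early (it follows from (iii) directly, and also from (ii) via \textbf{L4}) and then use the \textbf{A2}-plus-pointwise-bound technique throughout rather than literal gluing.
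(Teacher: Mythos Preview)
Your overall structure is close to the paper's, and your arguments for (i) $\Rightarrow$ (ii) and for the \textbf{L5} half of (iii) $\Rightarrow$ (i) are correct (in fact your (i) $\Rightarrow$ (ii) is slightly cleaner than the paper's, since you invoke \textbf{L5} directly rather than passing through \textbf{L4}). However, there is a concrete gap in (ii) $\Rightarrow$ (iii).

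You write ``applying \textbf{A4} to get $g_1\wedge g_2\in D[u_1\wedge u_2]$,'' but axiom \textbf{A4} yields $\max\{g_1,g_2\}=g_1\vee g_2\in D[u_1\wedge u_2]$, not the minimum. With this correction, your gluing argument via (ii) produces
\[
\nchi_{\{u_1<u_2\}}\,g_1+\nchi_{\{u_2<u_1\}}\,g_2+\nchi_{\{u_1=u_2\}}\,(g_1\vee g_2)\in D[u_1\wedge u_2],
\]
which has the \emph{wrong} term on the diagonal---Timoshin's formula requires $g_1\wedge g_2$ there. Shanmugalingam locality lets you glue in a pseudo-gradient of $u_1$ (or of $u_2$) on $\{u_1=u_2\}$, but neither $g_1$ nor $g_2$ alone coincides with $g_1\wedge g_2$ on that set. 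The fix, which your closing paragraph gestures at but does not carry out, is to first derive \textbf{L4} from (ii) (take $u_1=u_2=u$ and $B=\{g_1\le g_2\}$; then (ii) gives $g_1\wedge g_2=\nchi_B\,g_1+\nchi_{\X\setminus B}\,g_2\in D[u]$), and then take the pointwise minimum of \emph{two} glued functions---one with $g_1$ on $\{u_1\le u_2\}$, the other with $g_2$ on $\{u_2\le u_1\}$. This is exactly how the paper proceeds, after first closing the loop (ii) $\Rightarrow$ (i) so that \textbf{L4} is available.

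A smaller issue in (iii) $\Rightarrow$ \textbf{L1}: applying \eqref{eq:loc_Timosh} with $u_1=u$, $u_2=c$ lands you in $D[u\wedge c]$, not $D[u]$, so the glued pseudo-gradient vanishing on $\{u=c\}$ does not directly control $\underline D u$. You need to apply Timoshin to the pairs $(u-c,0)$ and $(c-u,0)$ to obtain pseudo-gradients of $(c-u)^+$ and $(u-c)^+$ separately, then recombine via $u-c=(u-c)^+-(c-u)^+$ and \textbf{A2}---precisely the pattern you cite from \textbf{L3} $\Rightarrow$ \textbf{L2}, but executed at the right level.
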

\begin{proof}
\\
{\color{blue}${\rm i)}\Longrightarrow{\rm ii)}$} Fix $u_1,u_2\in{\rm S}^p(\X)$
such that $u_1=u_2$ $\mm$-a.e.\ on some $E\subseteq\X$ Borel.
Pick $g_1\in D[u_1]$ and $g_2\in D[u_2]$. Observe that
$\underline D(u_2-u_1)+g_1\in D\big[(u_2-u_1)+u_1\big]=D[u_2]$ by \textbf{A2},
so that we have $\big(\underline D(u_2-u_1)+g_1\big)\wedge g_2\in D[u_2]$ by \textbf{L4}.
Since $\underline D(u_2-u_1)=0$ $\mm$-a.e.\ on $B$ by \textbf{L2}, we see that
$\nchi_B\,g_1+\nchi_{\X\setminus B}\,g_2\geq\big(\underline D(u_2-u_1)+g_1\big)\wedge g_2$
holds $\mm$-a.e.\ in $\X$, whence accordingly we conclude that
$\nchi_B\,g_1+\nchi_{\X\setminus B}\,g_2\in D[u_2]$ by \textbf{A2}.
This shows the validity of ii).\\
{\color{blue}${\rm ii)}\Longrightarrow{\rm i)}$} First of all, let us prove \textbf{L1}.
Let $u\in{\rm S}^p(\X)$ and $c\in\R$ satisfy $u=c$ $\mm$-a.e.\ on some Borel
set $B\subseteq\X$. Given any $g\in D[u]$, we deduce from ii) that
$\nchi_{\X\setminus B}\,g\in D[u]$, thus accordingly
$\mathcal E_p(u|B)\leq\int_B(\nchi_{\X\setminus B}\,g)^p\,\d\mm=0$. This proves
the property \textbf{L1}.

To show property \textbf{L4}, fix $u\in{\rm S}^p(\X)$ and $g_1,g_2\in D[u]$.
Let us denote $B:=\{g_1\leq g_2\}$. Therefore ii) grants that
$g_1\wedge g_2=\nchi_B\,g_1+\nchi_{\X\setminus B}\,g_2\in D[u]$, thus
obtaining \textbf{L4}. By recalling Proposition \ref{prop:implications_locality},
we conclude that $D$ is pointwise local.\\
{\color{blue}${\rm i)}+{\rm ii)}\Longrightarrow{\rm iii)}$} Fix $u_1,u_2\in{\rm S}^p(\X)$,
$g_1\in D[u_1]$ and $g_2\in D[u_2]$.
Recall that $g_1\vee g_2\in D[u_1\wedge u_2]$ by axiom \textbf{A4}.
Hence by using property ii) twice we obtain that
\begin{equation}\label{eq:equiv_ptwse_local_aux}\begin{split}
\nchi_{\{u_1\leq u_2\}}\,g_1+\nchi_{\{u_1>u_2\}}\,(g_1\vee g_2)&\in D[u_1\wedge u_2],\\
\nchi_{\{u_2\leq u_1\}}\,g_2+\nchi_{\{u_2>u_1\}}\,(g_1\vee g_2)&\in D[u_1\wedge u_2].
\end{split}\end{equation}
The pointwise minimum between the two functions that are written in
\eqref{eq:equiv_ptwse_local_aux} -- namely given by
$\nchi_{\{u_1<u_2\}}\,g_1+\nchi_{\{u_2<u_1\}}\,g_2+\nchi_{\{u_1=u_2\}}\,(g_1\wedge g_2)$
-- belongs to the class $D[u_1\wedge u_2]$ as well by property \textbf{L4},
thus showing iii).\\
{\color{blue}${\rm iii)}\Longrightarrow{\rm i)}$} First of all, let us prove \textbf{L1}.
Fix a function $u\in{\rm S}^p(\X)$ that is $\mm$-a.e.\ equal to some constant $c\in\R$
on a Borel set $B\subseteq\X$. By using iii) and the fact that $0\in D[0]$,
we have that
\begin{equation}\label{eq:equiv_ptwse_local_aux2}\begin{split}
\nchi_{\{u<c\}}\,g&\in D\big[(u-c)\wedge 0\big]=D\big[-(u-c)^+\big]
=D\big[(u-c)^+\big],\\
\nchi_{\{u>c\}}\,g&\in D\big[(c-u)\wedge 0\big]=D\big[-(c-u)^+\big]
=D\big[(c-u)^+\big].
\end{split}\end{equation}
Since $u-c=(u-c)^+ -(c-u)^+$, we know from \textbf{A2} and
\eqref{eq:equiv_ptwse_local_aux2} that
\[\nchi_{\{u\neq c\}}\,g=\nchi_{\{u<c\}}\,g+\nchi_{\{u>c\}}\,g\in D[u-c]=D[u],\]
whence $\mathcal E_p(u|B)\leq\int_B(\nchi_{\{u\neq c\}}\,g)^p\,\d\mm=0$.
This proves the property \textbf{L1}.

To show property \textbf{L4}, fix $u\in{\rm S}^p(\X)$ and $g_1,g_2\in D[u]$.
Hence \eqref{eq:loc_Timosh} with $u_1=u_2:=u$ simply reads as $g_1\wedge g_2\in D[u]$,
which gives \textbf{L4}. This proves that $D$ is pointwise local.
\end{proof}
\begin{remark}[\textbf{L1} does not imply \textbf{L2}]{\rm
In general, as we are going to show in the following example, it can happen that
a $D$-structure satisfies \textbf{L1} but not \textbf{L2}.

Let $G=(V,E)$ be a locally finite connected graph. The distance $\sfd(x,y)$ between two
vertices $x,y\in V$ is defined as the minimum length of a path joining $x$ to $y$,
while as a reference measure $\mm$ on $V$ we choose the counting measure. Notice that
any function $u:\,V\to\R$ is locally Lipschitz and that any bounded subset of $V$
is finite. We define a $D$-structure on the metric measure space $(V,\sfd,\mm)$ in the following way:
\begin{equation}\label{eq:D-structure_graphs}
D[u]:=\Big\{g:\,V\to [0,+\infty]\;\Big|\;\big|u(x)-u(y)\big|\leq g(x)+g(y)
\text{ for any }x,y\in V\text{ with }x\sim y\Big\}
\end{equation}
for every $u:\,V\to\R$, where the notation $x\sim y$ indicates that $x$ and $y$ are adjacent vertices,
i.e.\ that there exists an edge in $E$ joining $x$ to $y$.

We claim that $D$ fulfills \textbf{L1}. To prove it, suppose that some function $u:\,\X\to\R$ is constant on some
set $B\subseteq V$, say $u(x)=c$ for every $x\in B$. Define the function $g:\,V\to[0,+\infty)$ as
$$g(x):=\left\{\begin{array}{ll}
0\\
|c|+\big|u(x)\big|
\end{array}\quad\begin{array}{ll}
\text{ if }x\in B,\\
\text{ if }x\in V\setminus B.
\end{array}\right.$$
Hence $g\in D[u]$ and $\int_B g^p\,\d\mm=0$, so that $\mathcal{E}_p(u|B)=0$.
This proves the validity of \textbf{L1}.

On the other hand, if $V$ contains more than one vertex, then \textbf{L2} is not satisfied.
Indeed, consider any non-constant function $u:\,V\to\R$. Clearly any
pseudo-gradient $g\in D[u]$ of $u$ is not identically zero, thus there exists
$x\in V$ such that $\underline D u(x)>0$. Since $u$ is trivially constant
on the set $\{x\}$, we then conclude that property \textbf{L2} does not hold.
\fr}\end{remark}

Hereafter, we shall focus our attention on the pointwise local $D$-structures.
Under these locality assumptions, one can show the following calculus
rules for minimal pseudo-gradients, whose proof is suitably adapted from
analogous results that have been proved in \cite{AmbrosioGigliSavare11}.
\begin{proposition}[Calculus rules for $\underline D u$]\label{prop:properties_Du}
Let $(\X,\sfd,\mm)$ be a metric measure space and let $p\in(1,\infty)$.
Consider a pointwise local $D$-structure on $(\X,\sfd,\mm)$.
Then the following hold:
\begin{itemize}
\item[$\rm i)$] Let $u\in{\rm S}^p(\X)$ and let $N\subseteq\R$ be a Borel
set with $\mathcal L^1(N)=0$. Then the equality $\underline D u=0$ holds $\mm$-a.e.\ in
$u^{-1}(N)$.
\item[$\rm ii)$] \textsc{Chain rule}. Let $u\in{\rm S}^p(\X)$ and
$\varphi\in{\rm LIP}(\R)$. Then $|\varphi'|\circ u\,\underline D u\in D[\varphi\circ u]$.
More precisely, $\varphi\circ u\in{\rm S}^p(\X)$ and
$\underline D(\varphi\circ u)=|\varphi'|\circ u\,\underline D u$ holds $\mm$-a.e.\ in $\X$.
\item[$\rm iii)$] \textsc{Leibniz rule}. Let $u,v\in{\rm S}^p(\X)\cap L^\infty(\mm)$.
Then $|u|\,\underline D v+|v|\,\underline D u\in D[uv]$. In other words,
$uv\in{\rm S}^p(\X)\cap L^\infty(\mm)$ and $\underline D(uv)\leq|u|\,\underline D v+|v|\,\underline D u$
holds $\mm$-a.e.\ in $\X$.
\end{itemize}
\end{proposition}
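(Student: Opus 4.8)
The plan is to prove the three calculus rules in order, since rule ii) (the chain rule) is the workhorse that makes iii) almost immediate, and rule i) is both a stepping stone and a special case of the chain rule applied carefully.

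\textbf{Proof of i).} The idea is the standard approximation argument from \cite{AmbrosioGigliSavare11}. First I would reduce to the case where $u$ is bounded, by working on the superlevel sets $\{|u|\le k\}$ (using \textbf{L2}, which holds by pointwise locality, to control $\underline D u$ where $u$ is locally constant, or more directly by truncating $u$ via the lattice property \textbf{A4} and the already-available implications). For $N\subseteq\R$ Borel with $\mathcal L^1(N)=0$, choose open sets $A_n\supseteq N$ with $\mathcal L^1(A_n)\to 0$, write $A_n$ as a countable disjoint union of open intervals, and build primitives: let $\psi_n(t):=\int_0^t\nchi_{A_n}(s)\,\d s$, so $\psi_n$ is $1$-Lipschitz, $|\psi_n'|=\nchi_{A_n}$ a.e., and $\psi_n(t)\to 0$ uniformly on bounded sets since $\mathcal L^1(A_n)\to 0$. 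Granting the chain rule (proved next, but logically one proves i) first in the bounded case and bootstraps — or one simply orders the lemma so that the chain-rule computation is done via piecewise-linear $\varphi$ first), one gets $\nchi_{A_n}\circ u\,\underline D u\in D[\psi_n\circ u]$; since $\psi_n\circ u\to 0$ in $L^p_{loc}(\mm)$ and (after checking the gradients stay bounded in $L^p$) the completeness axiom \textbf{A5} forces $0\in D[\text{something}]$... The cleanest route: apply \textbf{L5} and \textbf{A5} to conclude $\nchi_{u^{-1}(A_n)}\,\underline D u\to 0$ appropriately, hence $\underline D u=0$ $\mm$-a.e.\ on $u^{-1}(N)\subseteq u^{-1}(A_n)$ for all $n$.

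\textbf{Proof of ii).} I would first handle $\varphi$ piecewise linear with finitely many pieces. If $\varphi$ is affine, $\varphi(t)=at+b$, then axiom \textbf{A2} and Remark \ref{rmk:conseq_D-structure} give $D[\varphi\circ u]=|a|\,D[u]$, so $|\varphi'|\circ u\,\underline D u=|a|\,\underline D u\in D[\varphi\circ u]$. For $\varphi$ piecewise affine, decompose $\R$ into the intervals where $\varphi$ is affine; on each such interval $I_j$ with slope $a_j$, the function $\varphi_j\circ u$ (where $\varphi_j$ agrees with $\varphi$ suitably) has $|a_j|\,\underline D u$ as a pseudo-gradient, and one glues using \textbf{A4} (lattice property) together with part i) applied to $N=\{$breakpoints$\}$ to kill the contribution on the $\mm$-null... rather, on the set $u^{-1}(\{\text{breakpoints}\})$ where $\underline D u=0$ by i). This yields $|\varphi'|\circ u\,\underline D u\in D[\varphi\circ u]$ for piecewise affine $\varphi$. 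For general Lipschitz $\varphi$, approximate by piecewise affine $\varphi_n\to\varphi$ with $\varphi_n\to\varphi$ uniformly on bounded sets and $\varphi_n'\to\varphi'$ $\mathcal L^1$-a.e.\ (possible since $\varphi'\in L^\infty$): then $\varphi_n\circ u\to\varphi\circ u$ in $L^p_{loc}(\mm)$, and $|\varphi_n'|\circ u\,\underline D u\to|\varphi'|\circ u\,\underline D u$ in $L^p(\mm)$ by dominated convergence (using i) to handle the a.e.\ convergence of $\varphi_n'\circ u$: the set where $\varphi_n'\not\to\varphi'$ is $\mathcal L^1$-null, so its preimage under $u$ is $\mm$-null on which $\underline D u=0$ anyway). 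Then \textbf{A5} gives $|\varphi'|\circ u\,\underline D u\in D[\varphi\circ u]$, so $\varphi\circ u\in{\rm S}^p(\X)$ and $\underline D(\varphi\circ u)\le|\varphi'|\circ u\,\underline D u$. For the reverse inequality, apply what we just proved to a Lipschitz left-inverse (or to $\psi$ with $\psi\circ\varphi={\rm id}$ on the relevant range): on $\{(|\varphi'|\circ u)>0\}$ one recovers $\underline D u\le(1/|\varphi'|\circ u)\,\underline D(\varphi\circ u)$ by a chain-rule application in the other direction, and on $\{(|\varphi'|\circ u)=0\}$ there is nothing to prove; combined with \textbf{L5} this gives equality $\mm$-a.e.

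\textbf{Proof of iii).} With the chain rule in hand this is the classical polarization trick. For $u,v\in{\rm S}^p(\X)\cap L^\infty(\mm)$, note $u+v,\,u-v\in{\rm S}^p(\X)\cap L^\infty(\mm)$ with $\underline D(u\pm v)\le\underline D u+\underline D v\in L^p(\mm)$ by \textbf{A2}. Since $t\mapsto t^2$ is Lipschitz on the bounded range of $u\pm v$, the chain rule gives $(u+v)^2,(u-v)^2\in{\rm S}^p(\X)$ with $\underline D((u\pm v)^2)=2|u\pm v|\,\underline D(u\pm v)\le 2|u\pm v|(\underline D u+\underline D v)$. Writing $uv=\tfrac14\big((u+v)^2-(u-v)^2\big)$ and applying \textbf{A2} once more, $uv\in{\rm S}^p(\X)\cap L^\infty(\mm)$ and
\[
\underline D(uv)\le\tfrac14\Big(2|u+v|(\underline D u+\underline D v)+2|u-v|(\underline D u+\underline D v)\Big)
=\tfrac12(|u+v|+|u-v|)(\underline D u+\underline D v).
\]
Since $\tfrac12(|u+v|+|u-v|)=\max\{|u|,|v|\}$, this gives $\underline D(uv)\le\max\{|u|,|v|\}(\underline D u+\underline D v)$, which is weaker than claimed. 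To get the sharp bound $|u|\,\underline D v+|v|\,\underline D u$ one instead localizes: split $\X$ into $\{|u|\ge|v|\}$ and its complement and run a more careful estimate, or — cleaner — use the chain rule together with the locality to reduce to the case $u=v$ first (there $\underline D(u^2)=2|u|\,\underline D u$ exactly) and then recover the bilinear statement by polarization applied on the sets where signs are controlled. Alternatively, mimic \cite{AmbrosioGigliSavare11} directly: approximate $u,v$ by simple functions and use that on each piece where $u,v$ are (locally) constant the claim is trivial by \textbf{L2}.

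\textbf{Main obstacle.} The delicate point is part i) and the way it feeds the chain rule: one must be careful about the logical order, since the clean proof of i) in full generality seems to want the chain rule, while the chain rule for piecewise-affine $\varphi$ wants i) to kill the breakpoint contributions. The resolution is to prove i) first for bounded $u$ using only the explicit primitive functions $\psi_n$ (which are themselves piecewise affine, hence handled by the elementary affine case of the chain rule plus \textbf{A4}, not the general chain rule), then prove the chain rule, then upgrade i) to unbounded $u$ by truncation. The second genuinely fiddly point is extracting the \emph{sharp} Leibniz constant $|u|\,\underline D v+|v|\,\underline D u$ rather than the crude $\max\{|u|,|v|\}(\underline D u+\underline D v)$; this requires the localization argument over $\{|u|\ge|v|\}$ and its complement, exactly as in \cite{AmbrosioGigliSavare11}, and is where most of the remaining bookkeeping lives.
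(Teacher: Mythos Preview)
Your overall architecture---affine $\Rightarrow$ piecewise affine $\Rightarrow$ general Lipschitz for the chain rule, with i) serving as an auxiliary lemma, and care about the logical order---matches the paper's. The paper resolves the circularity just as you suggest: it first proves the chain rule for piecewise affine and then for $C^1\cap{\rm LIP}$ maps (Steps~1--2), uses the $C^1$ case to obtain i) for compact $K$ via the primitive of $n\,\sfd(\cdot,K)\wedge 1$ (Step~3), upgrades to Borel $N$ by inner regularity (Step~4), and only then handles general Lipschitz $\varphi$ (Step~5). Your open-set/outer-regularity variant of i) is a legitimate alternative, and your use of dominated convergence in place of the paper's Mazur-lemma argument is actually cleaner.

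There are, however, two genuine gaps.

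\textbf{Equality in the chain rule.} The ``Lipschitz left-inverse'' idea does not work: a Lipschitz $\varphi$ need not admit any Lipschitz $\psi$ with $\psi\circ\varphi={\rm id}$ on $\{\varphi'\neq 0\}$ (take $\varphi$ a truncated $\sin$, or any Lipschitz function whose derivative changes sign on every interval). The set $\{\varphi'>0\}$ is merely measurable, not a union of intervals on which $\varphi$ is monotone, so patching local inverses via \textbf{L2} is not available either. The paper's device (Step~6) is different: assuming ${\rm Lip}(\varphi)=1$, set $\psi_\pm(t):=\pm t-\varphi(t)$ and apply the already-proven inequality to $\varphi$ and to $\psi_\pm$. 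On $u^{-1}\big(\{\pm\varphi'\geq 0\}\big)$ one gets
\[
\underline D u\leq\underline D(\varphi\circ u)+\underline D(\psi_\pm\circ u)
\leq\big(|\varphi'|\circ u+|\psi_\pm'|\circ u\big)\,\underline D u=\underline D u,
\]
forcing equality in the middle.

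\textbf{The sharp Leibniz constant.} You correctly note that polarization yields only $\underline D(uv)\leq\max\{|u|,|v|\}(\underline Du+\underline Dv)$, but neither proposed fix closes the gap. Splitting on $\{|u|\geq|v|\}$ still gives $|u|\,\underline Du+|u|\,\underline Dv$ there, which is the wrong coefficient on $\underline Du$. Approximation by simple functions fails at the outset: a simple function need not lie in ${\rm S}^p(\X)$, since none of the axioms produces a pseudo-gradient for $\nchi_B$. The paper instead first proves the sharp bound when $u,v\geq c>0$ via the logarithm,
\[
\underline D(uv)\leq |uv|\,\underline D\log(uv)\leq |uv|\big(\underline D\log u+\underline D\log v\big)=|v|\,\underline Du+|u|\,\underline Dv
\]
(Step~7), and then handles general bounded $u,v$ by localizing on the grid $u^{-1}(I_{n,k})\cap v^{-1}(I_{n,\ell})$ with $I_{n,k}=[k/n,(k+1)/n)$: on each cell one shifts $u,v$ by constants so that they lie in $[1/n,2/n]$, applies the positive case, and lets $n\to\infty$ (Step~8). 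This is presumably what ``mimic \cite{AmbrosioGigliSavare11}'' unpacks to, but it is not a localization over $\{|u|\geq|v|\}$.
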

\begin{proof}\\
{\color{blue}\textsc{Step 1.}} First, consider $\varphi$ affine,
say $\varphi(t)=\alpha\,t+\beta$. Then
$|\varphi'|\circ u\,\underline D u=|\alpha|\,\underline D u\in D[\varphi\circ u]$
by Remark \ref{rmk:conseq_D-structure} and \textbf{A2}.
Now suppose that the function $\varphi$ is piecewise affine, i.e.\ there exists
a sequence $(a_k)_{k\in\Z}\subseteq\R$, with $a_k<a_{k+1}$
for all $k\in\Z$ and $a_0=0$, such that each $\varphi\restr{[a_k,a_{k+1}]}$ is an affine function.
Let us denote $A_k:=u^{-1}\big([a_k,a_{k+1})\big)$ and $u_k:=(u\vee a_k)\wedge a_{k+1}$ for every index $k\in\Z$.
By combining \textbf{L3} with the axioms \textbf{A2} and \textbf{A5}, we can see that
$\nchi_{A_k}\,\underline D u\in D[u_k]$ for every $k\in\Z$. Called $\varphi_k:\,\R\to\R$
that affine function coinciding with $\varphi$ on $[a_k,a_{k+1})$, we deduce from
the previous case that
$|\varphi'_k|\circ u_k\,\underline D u_k\in D[\varphi_k\circ u_k]=D[\varphi\circ u_k]$,
whence we have that $|\varphi'|\circ u_k\,\nchi_{A_k}\,\underline D u\in D[\varphi\circ u_k]$
by \textbf{L5}, \textbf{A2} and \textbf{L2}.
Let us define $(v_n)_n\subseteq{\rm S}^p(\X)$ as
\[v_n:=\varphi(0)+\sum_{k=0}^n\big(\varphi\circ u_k-\varphi(a_k)\big)
+\sum_{k=-n}^{-1}\big(\varphi\circ u_k-\varphi(a_{k+1})\big)
\quad\text{ for every }n\in\N.\]
Hence $g_n:=\sum_{k=-n}^n|\varphi'|\circ u_k\,\nchi_{A_k}\,\underline D u\in D[v_n]$
for all $n\in\N$ by \textbf{A2} and Remark \ref{rmk:conseq_D-structure}.
Given that one has $v_n\to\varphi\circ u$ in $L^p_{loc}(\mm)$ and $g_n\to|\varphi'|\circ u\,\underline D u$ in $L^p(\mm)$ as $n\to\infty$, we finally conclude that
$|\varphi'|\circ u\,\underline D u\in D[\varphi\circ u]$, as required.\\
{\color{blue}\textsc{Step 2.}} We aim to prove the chain rule for $\varphi\in C^1(\R)\cap{\rm LIP}(\R)$. For any $n\in\N$, let us denote by $\varphi_n$ the piecewise
affine function interpolating the points $\big(k/2^n,\varphi(k/2^n)\big)$
with $k\in\Z$. We call $D\subseteq\R$ the countable set
$\big\{k/2^n\,:\,k\in\Z,\,n\in\N\big\}$. Therefore $\varphi_n$ uniformly converges to
$\varphi$ and $\varphi'_n(t)\to\varphi'(t)$ for all $t\in\R\setminus D$.
In particular, the functions $g_n:=|\varphi'_n|\circ u\,\underline D u$
converge $\mm$-a.e.\ to $|\varphi'|\circ u\,\underline D u$ by \textbf{L2}.
Moreover, ${\rm Lip}(\varphi_n)\leq{\rm Lip}(\varphi)$ for every $n\in\N$ by construction,
so that $(g_n)_n$ is a bounded sequence in $L^p(\mm)$. This implies that
(up to a not relabeled subsequence) $g_n\weakto|\varphi'|\circ u\,\underline D u$
weakly in $L^p(\mm)$. Now apply Mazur lemma: for any $n\in\N$, there exists
$(\alpha^n_i)_{i=n}^{N_n}\subseteq[0,1]$ such that $\sum_{i=n}^{N_n}\alpha^n_i=1$
and $h_n:=\sum_{i=n}^{N_n}\alpha^n_i\,g_i\overset{n}\to|\varphi'|\circ u\,\underline D u$
strongly in $L^p(\mm)$.
Given that $g_n\in D[\varphi_n\circ u]$ for every $n\in\N$ by
\textsc{Step 1}, we deduce from axiom \textbf{A2} that $h_n\in D[\psi_n\circ u]$
for every $n\in\N$, where $\psi_n:=\sum_{i=n}^{N_n}\alpha^n_i\,\varphi_i$.
Finally, it clearly holds that $\psi_n\circ u\to\varphi\circ u$ in $L^p_{loc}(\mm)$,
whence $|\varphi'|\circ u\,\underline D u\in D[\varphi\circ u]$ by \textbf{A5}.\\
{\color{blue}\textsc{Step 3.}} We claim that
\begin{equation}\label{eq:properties_Du_claim}
\underline D u=0\;\;\;\mm\text{-a.e.\ in }u^{-1}(K),
\quad\text{ for every }K\subseteq\R\text{ compact with }\mathcal L^1(K)=0.
\end{equation}
For any $n\in\N\setminus\{0\}$, define $\psi_n:=n\,\sfd(\cdot,K)\wedge 1$
and denote by $\varphi_n$ the primitive of $\psi_n$ such that $\varphi_n(0)=0$.
Since each $\psi_n$ is continuous and bounded, any function $\varphi_n$ is
of class $C^1$ and Lipschitz. By applying the dominated convergence theorem
we see that the $\mathcal L^1$-measure of the $\eps$-neighbourhood of $K$
converges to $0$ as $\eps\searrow 0$, thus accordingly $\varphi_n$ uniformly converges
to ${\rm id}_\R$ as $n\to\infty$. This implies that $\varphi_n\circ u\to u$
in $L^p_{loc}(\mm)$. Moreover, we know from \textsc{Step 2} that
$|\psi_n|\circ u\,\underline D u\in D[\varphi_n\circ u]$, thus also
$\nchi_{\X\setminus u^{-1}(K)}\,\underline D u\in D[\varphi_n\circ u]$.
Hence $\nchi_{\X\setminus u^{-1}(K)}\,\underline D u\in D[u]$ by \textbf{A5},
which forces the equality $\underline D u=0$ to hold $\mm$-a.e.\ in $u^{-1}(K)$,
proving \eqref{eq:properties_Du_claim}.\\
{\color{blue}\textsc{Step 4.}} We are in a position to prove i). Choose any
$\mm'\in\mathscr P(\X)$ such that $\mm\ll\mm'\ll\mm$ and call $\mu:=u_*\mm'$.
Then $\mu$ is a Radon measure on $\R$, in particular it is inner regular.
We can thus find an increasing sequence of compact sets $K_n\subseteq N$
such that $\mu\big(N\setminus\bigcup_n K_n\big)=0$. We already know from
\textsc{Step 3} that $\underline D u=0$ holds $\mm$-a.e.\ in $\bigcup_n u^{-1}(K_n)$.
Since $u^{-1}(N)\setminus\bigcup_n u^{-1}(K_n)$ is $\mm$-negligible
by definition of $\mu$, we conclude that $\underline D u=0$
holds $\mm$-a.e.\ in $u^{-1}(N)$. This shows the validity of property i).\\
{\color{blue}\textsc{Step 5.}} We now prove ii). Let us fix $\varphi\in{\rm LIP}(\R)$.
Choose some convolution kernels $(\rho_n)_n$ and define $\varphi_n:=\varphi*\rho_n$
for all $n\in\N$. Then $\varphi_n\to\varphi$ uniformly and $\varphi'_n\to\varphi'$
pointwise $\mathcal L^1$-a.e., whence accordingly $\varphi_n\circ u\to\varphi\circ u$ in
$L^p_{loc}(\mm)$ and $|\varphi'_n|\circ u\,\underline D u\to|\varphi'|\circ u\,\underline D u$
pointwise $\mm$-a.e.\ in $\X$. Since $|\varphi'_n|\circ u\,\underline D u
\leq{\rm Lip}(\varphi)\,\underline D u$ for all $n\in\N$, there exists a (not relabeled)
subsequence such that $|\varphi'_n|\circ u\,\underline D u\weakto|\varphi'|\circ u\,\underline D u$
weakly in $L^p(\mm)$. We know that $|\varphi'_n|\circ u\,\underline D u\in D[\varphi_n\circ u]$ for all $n\in\N$ because the chain rule holds for all $\varphi_n\in C^1(\R)\cap{\rm LIP}(\R)$,
hence by combining Mazur lemma and \textbf{A5} as in \textsc{Step 2} we obtain that
$|\varphi'|\circ u\,\underline D u\in D[\varphi\circ u]$, so that
$\varphi\circ u\in{\rm S}^p(\X)$ and the inequality
$\underline D(\varphi\circ u)\leq|\varphi'|\circ u\,\underline D u$
holds $\mm$-a.e.\ in $\X$.\\
{\color{blue}\textsc{Step 6.}} We conclude the proof of ii) by showing that one
actually has $\underline D(\varphi\circ u)=|\varphi'|\circ u\,\underline D u$.
We can suppose without loss of generality that ${\rm Lip}(\varphi)=1$.
Let us define the functions $\psi_\pm$ as $\psi_\pm(t):=\pm t-\varphi(t)$
for all $t\in\R$. Then it holds $\mm$-a.e.\ in $u^{-1}\big(\{\pm\varphi'\geq 0\}\big)$ that
$$\underline D u=\underline D(\pm u)\leq
\underline D(\varphi\circ u)+\underline D(\psi_\pm\circ u)
\leq\big(|\varphi'|\circ u+|\psi'_\pm|\circ u\big)\,\underline D u=\underline D u,$$
which forces the equality $\underline D(\varphi\circ u)=\pm\varphi'\circ u\,\underline D u$
to hold $\mm$-a.e.\ in the set $u^{-1}\big(\{\pm\varphi'\geq 0\}\big)$.
This grants the validity of $\underline D(\varphi\circ u)=|\varphi'|\circ u\,\underline D u$,
thus completing the proof of item ii).\\
{\color{blue}\textsc{Step 7.}} We show iii) for the case in which
$u,v\geq c$ is satisfied $\mm$-a.e.\ in $\X$, for some $c>0$.
Call $\eps:=\min\{c,c^2\}$ and note that the function $\log$ is Lipschitz on
the interval $[\eps,+\infty)$, then choose any Lipschitz function $\varphi:\,\R\to\R$
that coincides with $\log$ on $[\eps,+\infty)$.
Now call $C$ the constant $\log\big({\|uv\|}_{L^\infty(\mm)}\big)$
and choose a Lipschitz function $\psi:\,\R\to\R$ such that $\psi=\exp$ on the interval
$[\log\eps,C]$. By applying twice the chain rule ii), we thus deduce that
$uv\in{\rm S}^p(\X)$ and the $\mm$-a.e.\ inequalities
\[\begin{split}
\underline D(uv)&\leq|\psi'|\circ\varphi\circ(uv)\,\underline D
\big(\varphi\circ(uv)\big)\leq|uv|\,\big(\underline D\log u+\underline D\log v\big)\\
&=|uv|\left(\frac{\underline D u}{|u|}+\frac{\underline D v}{|v|}\right)=
|u|\,\underline D v+|v|\,\underline D u.
\end{split}\]
Therefore the Leibniz rule iii) is verified under the additional assumption that $u,v\geq c>0$.\\
{\color{blue}\textsc{Step 8.}}
We conclude by proving item iii) for general $u,v\in{\rm S}^p(\X)\cap L^\infty(\mm)$.
Given any $n\in\N$ and $k\in\Z$, let us denote $I_{n,k}:=\big[k/n,(k+1)/n\big)$.
Call $\varphi_{n,k}:\,\R\to\R$ the continuous function that is the identity on $I_{n,k}$ and constant elsewhere.
For any $n\in\N$, let us define
\begin{align*}
u_{n,k}&:=u-\frac{k-1}{n},&\tilde u_{n,k}&:=\varphi_{n,k}\circ u-\frac{k-1}{n}
&\text{ for all }k\in\Z,\\
v_{n,\ell}&:=v-\frac{\ell-1}{n},&\tilde v_{n,\ell}
&:=\varphi_{n,\ell}\circ v-\frac{\ell-1}{n}&\text{ for all }\ell\in\Z.
\end{align*}
Notice that the equalities $u_{n,k}=\tilde u_{n,k}$ and $v_{n,\ell}=\tilde v_{n,\ell}$
hold $\mm$-a.e.\ in $u^{-1}(I_{n,k})$ and $v^{-1}(I_{n,\ell})$, respectively. Hence
$\underline D u_{n,k}=\underline D\tilde u_{n,k}=\underline D u$ and
$\underline D v_{n,\ell}=\underline D\tilde v_{n,\ell}=\underline D v$
hold $\mm$-a.e.\ in $u^{-1}(I_{n,k})$ and $v^{-1}(I_{n,\ell})$, respectively, but
we also have that
\[\underline D(u_{n,k}\,v_{n,\ell})=\underline D(\tilde u_{n,k}\,\tilde v_{n,\ell})
\quad\text{ is verified }\mm\text{-a.e.\ in }u^{-1}(I_{n,k})\cap v^{-1}(I_{n,\ell}).\]
Moreover, we have the $\mm$-a.e.\ inequalities
$1/n\leq\tilde u_{n,k},\tilde v_{n,\ell}\leq 2/n$ by construction. Therefore for
any $k,\ell\in\Z$ it holds $\mm$-a.e.\ in $u^{-1}(I_{n,k})\cap v^{-1}(I_{n,\ell})$ that
\[\begin{split}
\underline D(uv)&\leq\underline D(\tilde u_{n,k}\,\tilde v_{n,\ell})
+\frac{|k-1|}{n}\,\underline D v_{n,\ell}+\frac{|\ell-1|}{n}\,\underline D u_{n,k}\\
&\leq|\tilde v_{n,\ell}|\,\underline D\tilde u_{n,k}+
|\tilde u_{n,k}|\,\underline D\tilde v_{n,\ell}+
\frac{|k-1|}{n}\,\underline D v_{n,\ell}+\frac{|\ell-1|}{n}\,\underline D u_{n,k}\\
&\leq\left(|v|+\frac{4}{n}\right)\underline D u
+\left(|u|+\frac{4}{n}\right)\underline D v,
\end{split}\]
where the second inequality follows from the case $u,v\geq c>0$, treated in \textsc{Step 7}.
This implies that the inequality
$\underline D(uv)\leq|u|\,\underline D v+|v|\,\underline D u+4\,(\underline D u
+\underline D v)/n$ holds $\mm$-a.e.\ in $\X$.
Given that $n\in\N$ is arbitrary, the Leibniz rule iii) follows.
\end{proof}
\section{Cotangent module associated to a \texorpdfstring{$D$}{D}-structure}
It is shown in \cite{Gigli14} that any metric measure space possesses a
first-order differential structure, whose construction relies upon the
notion of \emph{$L^p(\mm)$-normed $L^\infty(\mm)$-module}. For completeness,
we briefly recall its definition and we refer to \cite{Gigli14,Gigli17} for
a comprehensive exposition of this topic.
\begin{definition}[Normed module]
Let $(\X,\sfd,\mm)$ be a metric measure space and $p\in[1,\infty)$.
Then an \emph{$L^p(\mm)$-normed $L^\infty(\mm)$-module} is any quadruplet
$\big(\mathscr M,{\|\cdot\|}_{\mathscr M},\,\cdot\,,|\cdot|\big)$ such that
\begin{itemize}
\item[$\rm i)$] $\big(\mathscr M,{\|\cdot\|}_{\mathscr M}\big)$ is a Banach space,
\item[$\rm ii)$] $(\mathscr M,\cdot)$ is an algebraic module over the
commutative ring $L^\infty(\mm)$,
\item[$\rm iii)$] the \emph{pointwise norm} operator
$|\cdot|:\,\mathscr M\to L^p(\mm)^+$ satisfies
\begin{equation}\label{eq:ptwse_norm}\begin{split}
|f\cdot v|=|f||v|\;\;\;\mm\text{-a.e.}&\quad\text{ for every }f\in L^\infty(\mm)\text{ and }v\in\mathscr M,\\
{\|v\|}_{\mathscr M}={\big\||v|\big\|}_{L^p(\mm)}&\quad\text{ for every }v\in\mathscr M.
\end{split}\end{equation}
\end{itemize}
\end{definition}

A key role in \cite{Gigli14} is played by the \emph{cotangent module} $L^2(T^*\X)$,
which has a structure of $L^2(\mm)$-normed $L^\infty(\mm)$-module;
see \cite[Theorem/Definition 1.8]{Gigli17} for its characterisation.
The following result shows that a generalised version of such object can be
actually associated to any $D$-structure,
provided the latter is assumed to be pointwise local.
\begin{theorem}[Cotangent module associated to a $D$-structure]\label{thm:cot_mod}
Let $(\X,\sfd,\mm)$ be any metric measure space and let $p\in(1,\infty)$.
Consider a pointwise local $D$-structure on $(\X,\sfd,\mm)$.
Then there exists a unique couple $\big(L^p(T^*\X;D),\d\big)$, where $L^p(T^*\X;D)$
is an $L^p(\mm)$-normed $L^\infty(\mm)$-module and $\d:\,{\rm S}^p(\X)\to L^p(T^*\X;D)$
is a linear map, such that the following hold:
\begin{itemize}
\item[$\rm i)$] the equality $|\d u|=\underline{D}u$ is satisfied $\mm$-a.e.\ in
$\X$ for every $u\in{\rm S}^p(\X)$,
\item[$\rm ii)$] the vector space $\mathcal{V}$ of all elements of the form $\sum_{i=1}^n\nchi_{B_i}\,\d u_i$,
where $(B_i)_i$ is a Borel partition of $\X$ and $(u_i)_i\subseteq{\rm S}^p(\X)$, is dense in the space $L^p(T^*\X;D)$.
\end{itemize}
Uniqueness has to be intended up to unique isomorphism: given another such couple $(\mathscr{M},\d')$,
there is a unique isomorphism $\Phi:\,L^p(T^*\X;D)\to\mathscr{M}$
such that $\Phi(\d u)=\d' u$ for all $u\in S^p(\X)$.

The space $L^p(T^*\X;D)$ is called \emph{cotangent module},
while the map $\d$ is called \emph{differential}.
\end{theorem}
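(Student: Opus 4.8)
The plan is to construct $L^p(T^*\X;D)$ explicitly as a quotient of a space of formal combinations, equip it with a pointwise norm coming from the minimal pseudo-gradients, and then complete. First I would consider the set $\mathcal{V}_0$ of all formal finite sums $\sum_{i=1}^n \nchi_{B_i}\,[u_i]$ where $(B_i)_i$ is a Borel partition of $\X$ and $u_i \in {\rm S}^p(\X)$. Declaring two such sums equivalent when their "values" agree after refining the partitions, one makes $\mathcal{V}_0$ into a vector space and an $\s f(\X)$-module, with the obvious $L^\infty(\mm)$-action to be extended later. On $\mathcal{V}_0$ one defines the candidate pointwise norm by
\begin{equation}\label{eq:ptwse_norm_def}
\Big|\sum_{i=1}^n \nchi_{B_i}\,[u_i]\Big| := \sum_{i=1}^n \nchi_{B_i}\,\underline D u_i \qquad \mm\text{-a.e. in }\X .
\end{equation}
The first real task is to check that \eqref{eq:ptwse_norm_def} is well-defined on equivalence classes; this is exactly where \emph{pointwise locality} enters. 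If $\sum_i \nchi_{B_i}[u_i]$ and $\sum_j \nchi_{C_j}[v_j]$ represent the same element, then on $B_i \cap C_j$ one has $u_i - v_j$ constant $\mm$-a.e. — here I use \textbf{L2} (which holds by Proposition \ref{prop:implications_locality}) to get $\underline D(u_i - v_j) = 0$ there, and then \textbf{L5} together with axiom \textbf{A2} to conclude $\underline D u_i = \underline D v_j$ $\mm$-a.e. on $B_i \cap C_j$. One also checks subadditivity and the compatibility $|f\cdot \omega| = |f||\omega|$ for $f \in \s f(\X)$, again via \textbf{L5} and \textbf{A2}, plus \textbf{L2} to handle the set $\{f = 0\}$.

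Next I would set $\|\omega\|_{\mathcal{V}_0} := \big\||\omega|\big\|_{L^p(\mm)}$, verify it is a seminorm, quotient out the null elements, and define $L^p(T^*\X;D)$ to be the completion of the resulting normed space; the differential is $\d u := [\,\nchi_\X\,[u]\,]$. The pointwise norm and the $\s f(\X)$-module structure extend by continuity to the completion: for the module structure one uses that $\s f(\X)$ is strongly dense in $L^\infty(\mm)$ and that multiplication by a simple function is bounded on $\mathcal{V}_0$, so one obtains the $L^\infty(\mm)$-action; the identities in \eqref{eq:ptwse_norm} pass to the limit. This gives an $L^p(\mm)$-normed $L^\infty(\mm)$-module. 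Density of $\mathcal{V}$ is automatic since $\mathcal{V}$ is (the image of) $\mathcal{V}_0$, which is dense in the completion by construction. Linearity of $\d$ over $\R$ follows from axiom \textbf{A2}: $\nchi_\X\,[u] + \nchi_\X\,[v]$ and $\nchi_\X\,[u+v]$ have the same norm in every Borel piece because $\underline D(u+v) \le \underline D u + \underline D v$ in one direction, and the reverse from writing $u = (u+v) + (-v)$.

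For uniqueness, given another couple $(\mathscr{M}, \d')$ with the two properties, I would define $\Phi$ on the dense subspace of finite sums $\sum_i \nchi_{B_i}\,\d u_i$ by $\Phi\big(\sum_i \nchi_{B_i}\,\d u_i\big) := \sum_i \nchi_{B_i}\,\d' u_i$. Property i) for both modules forces $\big|\sum_i \nchi_{B_i}\,\d' u_i\big| = \sum_i \nchi_{B_i}\,\underline D u_i = \big|\sum_i \nchi_{B_i}\,\d u_i\big|$ $\mm$-a.e., so $\Phi$ is well-defined and norm-preserving, hence extends to a linear isometry of the completions; it is a module morphism because it commutes with multiplication by simple functions, and surjective by density of $\mathcal{V}$ in $\mathscr{M}$ (property ii) for $\mathscr{M}$). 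Uniqueness of $\Phi$ is clear since it is prescribed on a dense set. I expect the main obstacle to be the well-definedness of the pointwise norm \eqref{eq:ptwse_norm_def} — i.e. verifying carefully, using only the locality axioms \textbf{L1}–\textbf{L5} and \textbf{A1}–\textbf{A5}, that $\underline D u_i = \underline D v_j$ $\mm$-a.e. on the overlaps — since every subsequent step is a fairly mechanical completion-and-extension argument once the norm is known to descend to the quotient.
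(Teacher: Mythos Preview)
Your construction is essentially the paper's: a pre-cotangent module of formal sums $\sum_i \nchi_{B_i}[u_i]$ indexed by Borel partitions, pointwise norm $\sum_i \nchi_{B_i}\,\underline D u_i$, well-posedness via locality (the paper uses \textbf{L5} exactly as you do to get $|\underline D u_i-\underline D v_j|\leq\underline D(u_i-v_j)=0$ on overlaps), then completion and extension of the ${\sf Sf}(\X)$-action to $L^\infty(\mm)$; the uniqueness argument is identical. Two small points of comparison: the paper takes as equivalence relation directly $\underline D(u_i-v_j)=0$ $\mm$-a.e.\ on $B_i\cap C_j$ (rather than ``$u_i-v_j$ constant''), which already absorbs your later seminorm quotient into a single step; and once addition on $\mathcal V_0$ is defined by refining partitions and summing the functions, one has $\nchi_\X[u]+\nchi_\X[v]=\nchi_\X[u+v]$ \emph{by definition}, so linearity of $\d$ is automatic --- your ``same norm in every Borel piece'' argument is superfluous, and as written would not suffice anyway, since $|\omega_1|=|\omega_2|$ does not imply $\omega_1=\omega_2$.
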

\begin{proof}\\
{\color{blue}\textsc{Uniqueness.}}
Consider any element $\omega\in\mathcal{V}$ written as
$\omega=\sum_{i=1}^n\nchi_{B_i}\,\d u_i$,
with $(B_i)_i$ Borel partition of $\X$ and $u_1,\ldots,u_n\in S^p(\X)$. Notice that the requirements
that $\Phi$ is $L^\infty(\mm)$-linear and $\Phi\circ\d=\d'$
force the definition $\Phi(\omega):=\sum_{i=1}^n\nchi_{B_i}\,\d'u_i$. The $\mm$-a.e.\ equality
$$\big|\Phi(\omega)\big|=\sum_{i=1}\nchi_{B_i}\,|\d' u_i|=
\sum_{i=1}^n\nchi_{B_i}\,\underline{D}u_i=\sum_{i=1}^n\nchi_{B_i}\,|\d u_i|=|\omega|$$
grants that $\Phi(\omega)$ is well-defined, in the sense that it does not depend on the particular way of
representing $\omega$, and that $\Phi:\,\mathcal{V}\to\mathscr{M}$ preserves the pointwise norm.
In particular, one has that the map $\Phi:\,\mathcal{V}\to\mathscr{M}$ is (linear and) continuous.
Since $\mathcal{V}$ is dense in $L^p(T^*\X;D)$, we can uniquely extend $\Phi$
to a linear and continuous map $\Phi:\,L^p(T^*\X;D)\to\mathscr{M}$, which also preserves the pointwise
norm. Moreover, we deduce from the very definition of $\Phi$ that the identity $\Phi(h\,\omega)=h\,\Phi(\omega)$
holds for every $\omega\in\mathcal{V}$ and $h\in{\sf Sf}(\X)$, whence the $L^\infty(\mm)$-linearity
of $\Phi$ follows by an approximation argument. Finally, the image $\Phi(\mathcal{V})$ is dense
in $\mathscr{M}$, which implies that $\Phi$ is surjective. Therefore $\Phi$ is the unique isomorphism
satisfying $\Phi\circ\d=\d'$.\\
{\color{blue}\textsc{Existence.}}
First of all, let us define the \emph{pre-cotangent module} as
\[{\sf Pcm}:=\left\{\big\{(B_i,u_i)\big\}_{i=1}^n\;\bigg|\begin{array}{ll}
\;n\in\N,\;u_1,\ldots,u_n\in{\rm S}^p(\X),\\
(B_i)_{i=1}^n\text{ Borel partition of }\X
\end{array}\right\}.\]
We define an equivalence relation on $\sf Pcm$ as follows: we declare that
$\big\{(B_i,u_i)\big\}_i\sim\big\{(C_j,v_j)\big\}_j$ provided $\underline{D}(u_i-v_j)=0$
holds $\mm$-a.e.\ on $B_i\cap C_j$ for every $i,j$. The equivalence class of
an element $\big\{(B_i,u_i)\big\}_i$ of $\sf Pcm$ will be denoted by $[B_i,u_i]_i$.
We can endow the quotient ${\sf Pcm}/\sim$ with a vector space structure:
\begin{equation}\label{eq:def_vector_sp_operations}\begin{split}
[B_i,u_i]_i+[C_j,v_j]_j&:=[B_i\cap C_j,u_i+v_j]_{i,j},\\
\lambda\,[B_i,u_i]_i&:=[B_i,\lambda\,u_i]_i,
\end{split}\end{equation}
for every $[B_i,u_i]_i,[C_j,v_j]_j\in{\sf Pcm}/\sim$ and $\lambda\in\R$.
We only check that the sum operator is well-defined; the proof of the well-posedness of
the multiplication by scalars follows along the same lines.
Suppose that $\big\{(B_i,u_i)\big\}_i\sim\big\{(B'_k,u'_k)\big\}_k$
and $\big\{(C_j,v_j)\big\}_j\sim\big\{(C'_\ell,v'_\ell)\big\}_\ell$,
in other words $\underline D(u_i-u'_k)=0$ $\mm$-a.e.\ on $B_i\cap B'_k$
and $\underline D(v_j-v'_\ell)=0$ $\mm$-a.e.\ on $C_j\cap C'_\ell$ for every
$i,j,k,\ell$, whence accordingly
\[\underline D\big((u_i+v_j)-(u'_k+v'_\ell)\big)\overset{\textbf{L5}}\leq
\underline D(u_i-u'_k)+\underline D(v_j-v'_\ell)=0\quad\text{ holds }
\mm\text{-a.e.\ on }(B_i\cap C_j)\cap(B'_k\cap C'_\ell).\]
This shows that $\big\{(B_i\cap C_j,u_i+v_j)\big\}_{i,j}\sim
\big\{(B'_k\cap C'_\ell,u'_k+v'_\ell)\big\}_{k,\ell}$, thus proving
that the sum operator defined in \eqref{eq:def_vector_sp_operations} is well-posed.
Now let us define
\begin{equation}\label{eq:def_norm}
{\big\|[B_i,u_i]_i\big\|}_{L^p(T^*\X;D)}:=
\sum_{i=1}^n\bigg(\int_{B_i}(\underline{D}u_i)^p\,\d\mm\bigg)^{1/p}
\quad\text{ for every }[B_i,u_i]_i\in{\sf Pcm}/\sim.
\end{equation}
Such definition is well-posed: if $\big\{(B_i,u_i)\big\}_i\sim\big\{(C_j,v_j)\big\}_j$
then for all $i,j$ it holds that
\[|\underline D u_i-\underline D v_j|\overset{\textbf{L5}}\leq
\underline D(u_i-v_j)=0\quad\mm\text{-a.e.\ on }B_i\cap C_j,\]
i.e.\ that the equality $\underline D u_i=\underline D v_j$ is satisfied
$\mm$-a.e.\ on $B_i\cap C_j$. Therefore one has that
\[\begin{split}
\sum_i\bigg(\int_{B_i}(\underline D u_i)^p\,\d\mm\bigg)^{1/p}
&=\sum_{i,j}\bigg(\int_{B_i\cap C_j}(\underline D u_i)^p\,\d\mm\bigg)^{1/p}
=\sum_{i,j}\bigg(\int_{B_i\cap C_j}(\underline D v_j)^p\,\d\mm\bigg)^{1/p}\\
&=\sum_j\bigg(\int_{C_j}(\underline D v_j)^p\,\d\mm\bigg)^{1/p},
\end{split}\]
which grants that ${\|\cdot\|}_{L^p(T^*\X;D)}$ in \eqref{eq:def_norm} is well-defined.
The fact that it is a norm on ${\sf Pcm}/\sim$ easily follows from standard verifications.
Hence let us define
\[\begin{split}
&L^p(T^*\X;D):=\text{completion of }\big({\sf Pcm}/\sim,{\|\cdot\|}_{L^p(T^*\X;D)}\big),\\
&\d:\,{\rm S}^p(\X)\to L^p(T^*\X;D),\;\;\;\d u:=[\X,u]\text{ for every }u\in{\rm S}^p(\X).
\end{split}\]
Observe that $L^p(T^*\X;D)$ is a Banach space and that $\d$ is a linear operator.
Furthermore, given any $[B_i,u_i]_i\in{\sf Pcm}/\sim$ and
$h=\sum_j\lambda_j\,\nchi_{C_j}\in{\sf Sf}(\X)$,
where $(\lambda_j)_j\subseteq\R$ and $(C_j)_j$ is a Borel partition of $\X$, we set
\[\begin{split}
\big|[B_i,u_i]_i\big|&:=\sum_i\nchi_{B_i}\,\underline{D}u_i,\\
h\,[B_i,u_i]_i&:=[B_i\cap C_j,\lambda_j\,u_i]_{i,j}.
\end{split}\]
One can readily prove that such operations, which are well-posed again by the pointwise
locality of $D$, can be uniquely extended to a pointwise norm
$|\cdot|:\,L^p(T^*\X;D)\to L^p(\mm)^+$ and to a multiplication by $L^\infty$-functions
$L^\infty(\mm)\times L^p(T^*\X;D)\to L^p(T^*\X;D)$, respectively.
Therefore the space $L^p(T^*\X;D)$ turns out to be an $L^p(\mm)$-normed $L^\infty(\mm)$-module
when equipped with the operations described so far. In order to conclude,
it suffices to notice that
$$|\d u|=\big|[\X,u]\big|=\underline{D}u\;\;\;\text{holds }\mm\text{-a.e.}
\quad\text{ for every }u\in{\rm S}^p(\X)$$
and that $[B_i,u_i]_i=\sum_i\nchi_{B_i}\,\d u_i$ for all $[B_i,u_i]_i\in{\sf Pcm}/\sim$,
giving i) and ii), respectively.
\end{proof}
\bigskip

In full analogy with the properties of the cotangent module that is
studied in \cite{Gigli14}, we can show that the differential $\d$ introduced
in Theorem \ref{thm:cot_mod} is a closed operator, which satisfies both
the chain rule and the Leibniz rule.
\begin{theorem}[Closure of the differential]\label{thm:closure_d}
Let $(\X,\sfd,\mm)$ be a metric measure space and let $p\in(1,\infty)$.
Consider a pointwise local $D$-structure on $(\X,\sfd,\mm)$.
Then the differential operator $\d$ is \emph{closed}, i.e.\ if a sequence $(u_n)_n\subseteq{\rm S}^p(\X)$
converges in $L^p_{loc}(\mm)$ to some $u\in L^p_{loc}(\mm)$ and $\d u_n\weakto\omega$
weakly in $L^p(T^*\X;D)$ for some $\omega\in L^p(T^*\X;D)$, then $u\in{\rm S}^p(\X)$
and $\d u=\omega$.
\end{theorem}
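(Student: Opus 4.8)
The plan is to reduce the closure of $\d$ to the completeness axiom \textbf{A5} of the $D$-structure, by exploiting the structure of $L^p(T^*\X;D)$ together with the lower-semicontinuity of the norm under weak convergence. First I would observe that, since $\d u_n \weakto \omega$ weakly in $L^p(T^*\X;D)$, the norms $\|\d u_n\|_{L^p(T^*\X;D)} = \|\underline D u_n\|_{L^p(\mm)}$ are bounded, so the sequence $(\underline D u_n)_n$ is bounded in $L^p(\mm)$. Up to a non-relabelled subsequence we may assume $\underline D u_n \weakto g$ weakly in $L^p(\mm)$ for some $g \in L^p(\mm)^+$. The key point is then to show that $g$ controls $\omega$ pointwise, i.e.\ that $|\omega| \le g$ holds $\mm$-a.e., and simultaneously to produce, via a Mazur-lemma argument, a sequence of convex combinations of the $u_n$ whose minimal pseudo-gradients converge strongly to something dominating $|\omega|$.

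The core step: apply Mazur's lemma in $L^p(T^*\X;D) \times L^p(\mm)$ (or successively in each factor) to the weakly convergent sequence $(\d u_n, \underline D u_n) \weakto (\omega, g)$. This yields, for each $n$, a finite convex combination $v_n := \sum_{i} \alpha_i^n\, u_i$ (with $i$ ranging over indices $\ge n$, coefficients $\alpha_i^n \in [0,1]$ summing to $1$) such that $\d v_n = \sum_i \alpha_i^n\, \d u_i \to \omega$ strongly in $L^p(T^*\X;D)$ and $\sum_i \alpha_i^n\, \underline D u_i \to g$ strongly in $L^p(\mm)$. By linearity of $\d$ and item i) of Theorem \ref{thm:cot_mod}, $\underline D v_n = |\d v_n| \to |\omega|$ strongly in $L^p(\mm)$. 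On the other hand, from axiom \textbf{A2} (upper linearity) we have $\sum_i \alpha_i^n\, \underline D u_i \in D[v_n]$, hence $\underline D v_n \le \sum_i \alpha_i^n\, \underline D u_i$ by \textbf{L5}; passing to the limit gives $|\omega| \le g$ $\mm$-a.e. Meanwhile $v_n \to u$ in $L^p_{loc}(\mm)$, since each $v_n$ is a convex combination of the tail $(u_i)_{i \ge n}$ and $u_i \to u$ in $L^p_{loc}(\mm)$ (a standard Toeplitz/averaging argument).

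Now I would apply axiom \textbf{A5}: the sequences $(v_n)_n \subseteq {\rm S}^p(\X)$ and the pseudo-gradients $\bigl(\sum_i \alpha_i^n\, \underline D u_i\bigr)_n \subseteq L^p(\mm)$ satisfy $\sum_i \alpha_i^n\, \underline D u_i \in D[v_n]$, with $v_n \to u$ in $L^p_{loc}(\mm)$ and $\sum_i \alpha_i^n\, \underline D u_i \to g$ in $L^p(\mm)$; therefore $g \in D[u]$. In particular $u \in {\rm S}^p(\X)$ and $\underline D u \le g$ $\mm$-a.e.\ by \textbf{L5}. It remains to identify $\d u$ with $\omega$. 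For this, fix $u$ and consider $w_n := v_n - u \in {\rm S}^p(\X)$; then $w_n \to 0$ in $L^p_{loc}(\mm)$ and $\d w_n = \d v_n - \d u \to \omega - \d u$ strongly in $L^p(T^*\X;D)$. Applying the already-established part of the theorem (with limit function $0$, whose minimal pseudo-gradient vanishes) to the sequence $(w_n)_n$: the minimal pseudo-gradients $\underline D w_n = |\d w_n|$ form a bounded sequence converging strongly to $|\omega - \d u|$, and the same Mazur-plus-\textbf{A5} argument forces $0 \in D[0]$-domination, i.e.\ $\underline D 0 \ge |\omega - \d u|$, hence $|\omega - \d u| = 0$ $\mm$-a.e., so $\omega = \d u$.

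The main obstacle I anticipate is the last identification step: weak convergence $\d u_n \weakto \omega$ only gives $|\omega| \le g$ and $\underline D u \le g$, which is not by itself enough to conclude $\omega = \d u$ — one genuinely needs the strong convergence produced by Mazur's lemma, and then a delicate application of \textbf{A5} to the difference sequence $w_n = v_n - u$ (noting $w_n \in {\rm S}^p(\X)$ because both $v_n$ and $u$ are, by \textbf{A2}). One must be careful that $\underline D(v_n - u) \to |\omega - \d u|$ strongly: this follows since $\d(v_n - u) = \d v_n - \d u \to \omega - \d u$ in $L^p(T^*\X;D)$ and the pointwise norm is continuous, so $|\d(v_n-u)| \to |\omega - \d u|$ in $L^p(\mm)$. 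Then \textbf{A5} applied to $(w_n, \underline D w_n)$ — which is legitimate since $\underline D w_n \in D[w_n]$ trivially and $\underline D w_n \to |\omega - \d u|$ in $L^p(\mm)$, $w_n \to 0$ in $L^p_{loc}(\mm)$ — yields $|\omega - \d u| \in D[0]$, and minimality gives $|\omega - \d u| \le \underline D 0 = 0$. The rest is bookkeeping: checking the averaging argument for $L^p_{loc}$-convergence of the convex combinations, and that all elements stay in ${\rm S}^p(\X)$.
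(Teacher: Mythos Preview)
Your overall strategy --- upgrade weak to strong convergence via Mazur, then invoke axiom \textbf{A5} --- is sound and is the paper's route as well. The argument that $u \in {\rm S}^p(\X)$ via the auxiliary weak limit $g$ is correct, if more elaborate than necessary.

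The genuine gap is in the final identification step. From $|\omega - \d u| \in D[0]$ you conclude that ``minimality gives $|\omega - \d u| \le \underline D\,0 = 0$'', but this inequality points the wrong way. Property \textbf{L5} says the minimal pseudo-gradient is a \emph{pointwise lower bound} for all pseudo-gradients: $\underline D\,0 \le h$ holds $\mm$-a.e.\ for every $h \in D[0]$. So all you extract from $|\omega - \d u| \in D[0]$ is the triviality $0 \le |\omega - \d u|$. Indeed, by \textbf{A2} (applied with $u_1=u_2=0$, $g_1=g_2=0$) the class $D[0]$ is all of $L^0(\mm)^+$, so membership in $D[0]$ carries no information whatsoever.

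The fix, which is exactly what the paper does, is a double-limit argument. After Mazur you have $v_n \to u$ in $L^p_{loc}(\mm)$ and $\d v_n \to \omega$ strongly. For each \emph{fixed} $m$, apply \textbf{A5} to the sequence $(v_n - v_m)_n$ with pseudo-gradients $\underline D(v_n - v_m) = |\d v_n - \d v_m|$, which converge in $L^p(\mm)$ to $|\omega - \d v_m|$; together with \textbf{L5} this gives $\underline D(u - v_m) \le |\omega - \d v_m|$ $\mm$-a.e.\ (in particular $u = (u-v_0)+v_0 \in {\rm S}^p(\X)$, so the detour through $g$ is unnecessary). Now let $m \to \infty$:
\[
{\|\d u - \d v_m\|}_{L^p(T^*\X;D)} = {\big\|\underline D(u - v_m)\big\|}_{L^p(\mm)} \le {\|\omega - \d v_m\|}_{L^p(T^*\X;D)} \to 0,
\]
so $\d v_m \to \d u$; since also $\d v_m \to \omega$, you conclude $\d u = \omega$. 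The point is that you must compare $u$ to $v_m$ (for which \textbf{L5} gives a \emph{useful} upper bound on $\underline D(u-v_m)$), not to $0$.
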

\begin{proof}
Since $\d$ is linear, we can assume with no loss of generality that
$\d u_n\to\omega$ in $L^p(T^*\X;D)$ by Mazur lemma, so that
$\d(u_n-u_m)\to\omega-\d u_m$ in $L^p(T^*\X;D)$ for any $m\in\N$.
In particular, one has $u_n-u_m\to u-u_m$ in $L^p_{loc}(\mm)$ and
$\underline D(u_n-u_m)=\big|\d(u_n-u_m)\big|\to|\omega-\d u_m|$
in $L^p(\mm)$ as $n\to\infty$ for all $m\in\N$,
whence $u-u_m\in{\rm S}^p(\X)$ and $\underline D(u-u_m)\leq|\omega-\d u_m|$
holds $\mm$-a.e.\ for all $m\in\N$ by \textbf{A5} and \textbf{L5}. Therefore
$u=(u-u_0)+u_0\in{\rm S}^p(\X)$ and
\[\begin{split}
\lims_{m\to\infty}{\|\d u-\d u_m\|}_{L^p(T^*\X;D)}
&=\lims_{m\to\infty}{\big\|\underline D(u-u_m)\big\|}_{L^p(\mm)}
\leq\lims_{m\to\infty}{\|\omega-\d u_m\|}_{L^p(T^*\X;D)}\\
&=\lims_{m\to\infty}\lim_{n\to\infty}{\|\d u_n-\d u_m\|}_{L^p(T^*\X;D)}=0,
\end{split}\]
which grants that $\d u_m\to\d u$ in $L^p(T^*\X;D)$ as $m\to\infty$ and
accordingly that $\d u=\omega$.
\end{proof}
\begin{proposition}[Calculus rules for $\d u$]
Let $(\X,\sfd,\mm)$ be any metric measure space and let $p\in(1,\infty)$.
Consider a pointwise local $D$-structure on $(\X,\sfd,\mm)$.
Then the following hold:
\begin{itemize}
\item[$\rm i)$] Let $u\in{\rm S}^p(\X)$ and let $N\subseteq\R$ be a Borel
set with $\mathcal L^1(N)=0$. Then $\nchi_{u^{-1}(N)}\,\d u=0$.
\item[$\rm ii)$] \textsc{Chain rule}. Let $u\in{\rm S}^p(\X)$ and
$\varphi\in{\rm LIP}(\R)$ be given. Recall that $\varphi\circ u\in{\rm S}^p(\X)$
by Proposition \ref{prop:properties_Du}. Then $\d(\varphi\circ u)=\varphi'\circ u\,\d u$.
\item[$\rm iii)$] \textsc{Leibniz rule}. Let $u,v\in{\rm S}^p(\X)\cap L^\infty(\mm)$
be given. Recall that $uv\in{\rm S}^p(\X)\cap L^\infty(\mm)$
by Proposition \ref{prop:properties_Du}. Then $\d(uv)=u\,\d v+v\,\d u$.
\end{itemize}
\end{proposition}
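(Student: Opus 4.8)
The plan is to reduce each calculus rule for $\d$ to the corresponding calculus rule for $\underline D u$ proved in Proposition~\ref{prop:properties_Du}, using the defining property $|\d w|=\underline D w$ from Theorem~\ref{thm:cot_mod}~i) together with the fact that pointwise norm on the module detects equality of elements. The key observation is that for any $\omega\in L^p(T^*\X;D)$ one has $\nchi_A\,\omega=0$ if and only if $|\omega|=0$ $\mm$-a.e.\ on $A$; hence to prove two module elements coincide it suffices to show their difference has vanishing pointwise norm.

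For item i), I would note that $\varphi\circ u=c$ (constant) on $u^{-1}(N)$ is false in general, so instead I argue directly: by Proposition~\ref{prop:properties_Du}~i), $\underline D u=0$ $\mm$-a.e.\ on $u^{-1}(N)$, so $|\nchi_{u^{-1}(N)}\,\d u|=\nchi_{u^{-1}(N)}\,|\d u|=\nchi_{u^{-1}(N)}\,\underline D u=0$ $\mm$-a.e., whence $\nchi_{u^{-1}(N)}\,\d u=0$. For item iii), the Leibniz rule, I would first establish the chain rule ii) and then derive iii) by the standard polarization trick $uv=\tfrac14\big((u+v)^2-(u-v)^2\big)$ together with the special case $\d(w^2)=2w\,\d w$, which itself follows from ii) applied to a Lipschitz function agreeing with $t\mapsto t^2$ on a bounded interval containing the essential image of $w$ (legitimate since $w\in L^\infty(\mm)$). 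Alternatively one can mimic the truncation-and-localization argument of Step~8 in Proposition~\ref{prop:properties_Du}; I would prefer the polarization route as it is shorter.

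The heart of the matter is therefore item ii), the chain rule $\d(\varphi\circ u)=\varphi'\circ u\,\d u$. The inequality $\underline D(\varphi\circ u)=|\varphi'|\circ u\,\underline D u$ from Proposition~\ref{prop:properties_Du}~ii) gives $|\d(\varphi\circ u)|=|\varphi'\circ u\,\d u|$ $\mm$-a.e., but this only controls pointwise norms, not the module elements themselves — so I cannot conclude directly. The strategy I would follow is: first handle $\varphi$ affine, where $\d(\varphi\circ u)=\d(\alpha u+\beta)=\alpha\,\d u=\varphi'\circ u\,\d u$ by linearity of $\d$ and the observation that $\d$ of a constant vanishes (its pointwise norm is $\underline D(\text{const})=0$). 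Next, for $\varphi$ piecewise affine with breakpoints $(a_k)$, write $\X$ as the Borel partition $\{u^{-1}([a_k,a_{k+1}))\}_k$; on each piece $u^{-1}([a_k,a_{k+1}))$ one has $\varphi\circ u=\varphi_k\circ u+\text{const}$ where $\varphi_k$ is affine, so localizing and using the affine case plus $L^\infty(\mm)$-linearity of multiplication by $\nchi$'s gives $\nchi_{u^{-1}([a_k,a_{k+1}))}\,\d(\varphi\circ u)=\nchi_{u^{-1}([a_k,a_{k+1}))}\,\varphi_k'\circ u\,\d u=\nchi_{u^{-1}([a_k,a_{k+1}))}\,\varphi'\circ u\,\d u$; summing over $k$ yields the claim. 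Here I must be careful that $\varphi\circ u$ and $\varphi_k\circ u$ differ by a constant only $\mm$-a.e.\ on the relevant piece, but that is exactly what the equivalence relation defining $\sf Pcm$ is built to absorb, and one checks it via $\underline D$ of the difference vanishing by Proposition~\ref{prop:properties_Du}~i).

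Finally, for general $\varphi\in{\rm LIP}(\R)$, I would pass to the limit: take $\varphi_n$ piecewise affine (e.g.\ the interpolation at dyadic points) with $\varphi_n\to\varphi$ uniformly, ${\rm Lip}(\varphi_n)\le{\rm Lip}(\varphi)$, and $\varphi_n'\to\varphi'$ pointwise $\mathcal L^1$-a.e. Then $\varphi_n\circ u\to\varphi\circ u$ in $L^p_{loc}(\mm)$ and, using Proposition~\ref{prop:properties_Du}~i) to kill the exceptional set, $\varphi_n'\circ u\,\d u\to\varphi'\circ u\,\d u$ in $L^p(T^*\X;D)$ by dominated convergence (the pointwise norms are dominated by ${\rm Lip}(\varphi)\,\underline D u\in L^p(\mm)$). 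Since $\d(\varphi_n\circ u)=\varphi_n'\circ u\,\d u$ for each $n$ by the piecewise-affine case, and since $\d(\varphi_n\circ u)$ converges in $L^p(T^*\X;D)$, the closure of $\d$ from Theorem~\ref{thm:closure_d} forces $\d(\varphi\circ u)=\lim_n\d(\varphi_n\circ u)=\varphi'\circ u\,\d u$. The main obstacle is the passage from pointwise-norm identities to genuine module identities: everywhere one is tempted to ``divide by $\underline D u$'', and the correct substitute is the localization-on-level-sets argument combined with the closure theorem, which is why the affine and piecewise-affine cases must be done by hand rather than deduced from Proposition~\ref{prop:properties_Du}~ii) alone.
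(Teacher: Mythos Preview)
Your arguments for i) and ii) coincide with the paper's: item i) is the same one-line reduction to Proposition~\ref{prop:properties_Du}~i), and for ii) the paper likewise proceeds affine $\to$ piecewise affine (localizing on the sets $u^{-1}(I_k)$ and using that the difference is constant there) $\to$ general $\varphi\in{\rm LIP}(\R)$ via dyadic piecewise-affine interpolants, dominated convergence for $\varphi_n'\circ u\,\d u$, and the closure Theorem~\ref{thm:closure_d}.

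For item iii) your route genuinely differs from the paper's. The paper first treats the case $u,v\geq 1$ via the logarithm: from the chain rule one gets $\d\log(uv)=\d\log u+\d\log v$, i.e.\ $\d(uv)/(uv)=\d u/u+\d v/v$, and multiplies through by $uv$; the general bounded case is then reduced to this one by shifting $u,v$ by a large constant $C$ and expanding $\d\big((u+C)(v+C)\big)$ in two ways. Your polarization $uv=\tfrac14\big((u+v)^2-(u-v)^2\big)$ together with $\d(w^2)=2w\,\d w$ (the chain rule applied to a Lipschitz extension of $t\mapsto t^2$ on an interval containing the essential range of $w$) is equally correct and a bit more economical, avoiding both the case distinction and the Lipschitz truncation of $\log$. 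The paper's log-and-shift argument, on the other hand, mirrors the structure of \textsc{Step 7}--\textsc{Step 8} in Proposition~\ref{prop:properties_Du} and so makes the parallel between the two propositions more transparent.
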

\begin{proof}\\
{\color{blue}$\rm i)$} We have that $|\d u|=\underline D u=0$ holds $\mm$-a.e.\ on
$u^{-1}(N)$ by item i) of Proposition \ref{prop:properties_Du},
thus accordingly $\nchi_{u^{-1}(N)}\,\d u=0$, as required.\\
{\color{blue}$\rm ii)$} If $\varphi$ is an affine function, say $\varphi(t)=\alpha\,t+\beta$,
then $\d(\varphi\circ u)=\d(\alpha\,u+\beta)=\alpha\,\d u=\varphi'\circ u\,\d u$.
Now suppose that $\varphi$ is a piecewise affine function. Say that $(I_n)_n$ is a sequence
of intervals whose union covers the whole real line $\R$ and that $(\psi_n)_n$ is a sequence of
affine functions such that $\varphi\restr{I_n}=\psi_n$ holds for every $n\in\N$.
Since $\varphi'$ and $\psi'_n$ coincide $\mathcal L^1$-a.e.\ in the interior of $I_n$, we have that
$\d(\varphi\circ f)=\d(\psi_n\circ f)=\psi'_n\circ f\,\d f=\varphi'\circ f\,\d f$ holds $\mm$-a.e.\ on
$f^{-1}(I_n)$ for all $n$, so that $\d(\varphi\circ u)=\varphi'\circ u\,\d u$ is verified
$\mm$-a.e.\ on $\bigcup_n u^{-1}(I_n)=\X$.

To prove the case of a general Lipschitz function $\varphi:\,\R\to\R$, we want to approximate
$\varphi$ with a sequence of piecewise affine functions: for any $n\in\N$, let us denote
by $\varphi_n$ the function that coincides with $\varphi$ at $\big\{k/2^n\,:\,k\in\Z\big\}$
and that is affine on the interval $\big[k/2^n,(k+1)/2^n\big]$ for every $k\in\Z$.
It is clear that ${\rm Lip}(\varphi_n)\leq{\rm Lip}(\varphi)$ for all $n\in\N$.
Moreover, one can readily check that, up to a not relabeled subsequence,
$\varphi_n\to\varphi$ uniformly on $\R$ and $\varphi'_n\to\varphi'$ pointwise
$\mathcal L^1$-almost everywhere. The former grants that
$\varphi_n\circ u\to\varphi\circ u$ in $L^p_{loc}(\mm)$. Given that
$|\varphi'_n-\varphi'|^p\circ u\,(\underline D u)^p
\leq 2^p\,{\rm Lip}(\varphi)^p\,(\underline D u)^p\in L^1(\mm)$ for all $n\in\N$
and $|\varphi'_n-\varphi'|^p\circ u\,(\underline D u)^p\to 0$ pointwise
$\mm$-a.e.\ by the latter above together with i), we obtain
$\int|\varphi'_n-\varphi'|^p\circ u\,(\underline D u)^p\,\d\mm\to 0$ as $n\to\infty$
by the dominated convergence theorem. In other words,
$\varphi'_n\circ u\,\d u\to\varphi'\circ u\,\d u$ in the strong topology
of $L^p(T^*\X;D)$. Hence Theorem \ref{thm:closure_d} ensures that
$\d(\varphi\circ u)=\varphi'\circ u\,\d u$, thus proving the chain rule ii)
for any $\varphi\in{\rm LIP}(\R)$.\\
{\color{blue}$\rm iii)$} In the case $u,v\geq 1$, we argue as in
the proof of Proposition \ref{prop:properties_Du} to deduce from ii) that
\[\frac{\d(uv)}{uv}=\d\log(uv)=\d\big(\log(u)+\log(v)\big)=\d\log(u)+\d\log(v)=
\frac{\d u}{u}+\frac{\d v}{v},\]
whence we get $\d(uv)=u\,\d v+v\,\d u$ by multiplying both sides by $uv$.

In the general case $u,v\in L^\infty(\mm)$, choose a constant $C>0$ so big that
$u+C,v+C\geq 1$. By the case treated above, we know that
\begin{equation}\label{eq:calc_rules_d_aux1}\begin{split}
\d\big((u+C)(v+C)\big)&=(u+C)\,\d(v+C)+(v+C)\,\d(u+C)\\
&=(u+C)\,\d v+(v+C)\,\d u\\
&=u\,\d v+v\,\d u+C\,\d(u+v),
\end{split}\end{equation}
while a direct computation yields
\begin{equation}\label{eq:calc_rules_d_aux2}
\d\big((u+C)(v+C)\big)=\d\big(uv+C(u+v)+C^2\big)
=\d(uv)+C\,\d(u+v).
\end{equation}
By subtracting \eqref{eq:calc_rules_d_aux2} from \eqref{eq:calc_rules_d_aux1},
we finally obtain that $\d(uv)=u\,\d v+v\,\d u$, as required.
This completes the proof of the Lebniz rule iii).
\end{proof}
\bigskip

\noindent{\bf Acknowledgements.} 
\noindent This research has been supported by the MIUR SIR-grant `Nonsmooth Differential Geometry' (RBSI147UG4).
{\footnotesize
\def\cprime{$'$} \def\cprime{$'$}

}
\end{document}